\numberwithin{equation}{section}
\theoremstyle{plain}
\newtheorem{theorem}[equation]{Theorem}
\newtheorem{proposition}[equation]{Proposition}
\newtheorem{lemma}[equation]{Lemma}
\newtheorem{corollary}[equation]{Corollary}
\newcommand{\Hom}{\operatorname{Hom}}
\theoremstyle{definition}
\newtheorem{definition}[equation]{Definition}
\newtheorem{example}[equation]{Example}
\theoremstyle{remark}
\newtheorem{remark}[equation]{Remark}
\newcommand{\mc}{\operatorname{\mathsf{MC}}}
\newcommand{\ul}{\underline}
\newcommand{\To}{\longrightarrow}
\def\mcC{\mathcal{C}}
\def\Ho{\mathsf{Ho}}
\def\mcD{\mathcal{D}}
\def\A{\mathcal{A}}
\def\X{\mathcal{X}}
\begin{document}
\title [\tiny{The left (right) triangulated structures of stable categories
}]{The left and right triangulated structures of stable categories}
\author [\tiny{Zhi-Wei Li}] {Zhi-Wei Li}
\thanks{\tiny{Supported by JSNU12XLR025.}}
\thanks{\tiny{E-mail address: zhiweili$\symbol{64}$jsnu.edu.cn }} \maketitle
\begin{center}
\tiny{School of Mathematics and Statistics, \ \ Jiangsu Normal University\\
Xuzhou 221116, Jiangsu, P. R. China}
\end{center}
\vskip 5pt

\begin{abstract} Beligiannis and Marmaridis [\emph{Comm. in Algebra,} 22(12)
(1994), 5021-5036] constructed the left and right triangulated structures on the stable categories of additive categories induced from some homological finite subcategories. We extend their results to slightly more general settings. As an application of our results we give some new examples of stable categories which have left or right triangulated structures from abelian model categories. An interesting outcome is that we can describe the pretriangulated structures of the homotopy categories of abelian model categories via the ones of stable categories.

\end{abstract}

\maketitle

\setcounter{tocdepth}{1}
\tableofcontents

\section{Introduction}

In \cite{Verdier} J. -L. Verdier introduced the notion of triangulated structures on an additive category equipped with an additive autoequivalent functor.  Verdier's triangulated category can be viewed as ``stable" in the sense that the endofunctor is an equivalence. Almost at the same time, in \cite[Theorem I.2.2, Proposition I.3.5]{Quillen67} D. Quillen proved that the homotopy categories of his pointed model categories carried some analogues triangulated structures which satisfied Verdier's triangulated axioms except that the corresponding endofunctors are not equivalences. This can be seen as the first step for studying the ``unstable" triangulated categories in the sense that the endofunctor needs not to be an equivalence. Later, K. S. Brown generalized Quillen's result to the homotopy categories of ``categories of fibrant objects" \cite[Theorem 3]{Brown74}. For additive categories, such ``unstable" triangulated structures also have been studied by many authors \cite{Keller/Vossieck87}, \cite{Beligiannis/Marmaridis94}, \cite{Beligiannis01} and \cite{Beligiannis/Reiten07}.

Recall that a left triangulated category is an additive category $\mcC$ equipped with an additive endofunctor $\Omega$ ( called the loop functor) and a class of left triangles which satisfies Verdier's triangulated axioms \cite[Definition 2.3]{Beligiannis/Marmaridis94}. Dually, there is a notion of right triangulated category.  The additive categories equipped with homological finite subcategories are important examples of the left (right) triangulated categories. Let $\mcC$ be an additive category and $\X$ a full additive subcategory of $\mcC$. If $\X$ is contravariantly finite in $\mcC$ and any $\X$-epic has a kernel, A. Beligiannis and N. Marmaridis proved that the stable category $\mcC/\X$ has a left triangulated category; the loop functor $\Omega$ and the class of left triangles are defined via right $\X$-approximations \cite[Theorem 2.12]{Beligiannis/Marmaridis94}. Dually, if $\X$ is covariantly finite in $\mcC$ and any $\X$-monic has a cokernel, the stable category $\mcC/\X$ becomes a right triangulated category. If $\mcC$ is an abelian category and $\X$ is a functorially finite subcategory, then the stable is a pretriangulated category which is an additive category $\mcC$ equipped with an adjoint pair $(\Omega, \Sigma)$ of endofunctors and in addition with classes of left and right triangles. The classes of left and right triangles satisfy compatible conditions \cite[Chapter II.1]{Beligiannis/Reiten07}.

Let $\A$ be a bicomplete abelian category. M. Hovey discovered a correspondence between the abelian model structures on $\A$ and the compatible complete cotorsion pairs of $\A$ \cite[Theorem 2.2]{Hovey02}. The connection between model structures and cotorsion pairs is also discussed by A. Beligiannis and I. Reiten in  \cite[Chapter VIII]{Beligiannis/Reiten07}.
By Hovey's correspondence, given an abelian model structure of $\A$, there are two compatible complete cotorsion pairs $(\A_c, \A_{f}\cap \A_{triv})$ and $(\A_{c}\cap \A_{triv}, \A_f)$, where $\A_f$ (respectively $\A_c$) is the full subcategory of fibrant (respectively cofibrant) objects and $\A_{triv}$ the full subcategory of trivial objects of $\A$. In particular, $\A_c, \A_f$ are additive subcategories of $\A$. Let $\omega$ be the full subcategory of trivial bifibrant objects of $\A$. Then $\omega$ is contravariantly finite in $\A_f$ and covariantly finite in $\A_c$. Especially, $\omega$ is functorially finite in $\A_{cf}=\A_c\cap \A_f$ and the homotopy category $\Ho(\A)$ of the abelian model category $\A$ is equivalent to the stable category $\A_{cf}/\omega$ \cite[Theorem VII.4.2]{Beligiannis/Reiten07}, \cite[Proposition 4.3,4.7]{Gillespie11}, \cite[Proposition 1.1.14]{Becker12}. So by D. Quillen's theorem \cite[Theorem I.2.2, Proposition I.3.5]{Quillen67}, $\A_{cf}/\omega$ has a pretriangulated structure in the sense of \cite[Definition 4.9]{Beligiannis01} induced from $\Ho(\A)$. Unfortunately, in general, it is difficult to describe explicitly the pretriangulated structure of $\Ho(\A)$ and $\A_{cf}/\omega$. We ask if it is possible to realize the stable categories $\A_f/\omega$ as a left triangulated category and $\A_c/\omega$ as a right triangulated category, and if these left and right triangulated structures induce the pretriangualted structure of the homotopy category $\Ho(\A)$ and the stable category $\A_{cf}/\omega$.  Now $\omega$-epics doesn't necessarily have kernels in $\A_f$ and $\omega$-monics doesn't necessarily have cokernels in $\A_c$. Also, we don't know how to realize the stable categories as the homotopy categories of some model categories.  So both Beligiannis-Marmaridis' s theorem \cite[Theorem 2.12]{Beligiannis/Marmaridis94} and Quillen's result \cite[Theorem I.2.2, Proposition I.3.5]{Quillen67} fail in this case. We observed that in such case, special $\omega$-epics (see Definition 3.3) have kernels in $\A_f$ and special $\omega$-monics (see the dual of Definition 3.3) have cokernels in $\A_c$. These motivate us to give the following theorem (Theorem 3.6):

\vskip10pt
\noindent{\bf Main Theorem 1: } \ {\it Let $\mcC $ be an additive category and $\X$ a full additive subcategory of $\mcC$.
\vskip5pt
$(1)$ \  If $\X$ is contravariantly finite in $\mcC$ and any special $\X$-epic has a kernel in $\mcC$, then the stable category $\mcC/\X$ has a left triangulated structure induced by $\X$.

$(2)$ \ If $\X$ is covariantly finite in $\mcC$ and any special $\X$-monic has a cokernel in $\mcC$, then the stable category $\mcC/\omega$ has a right triangulated structure induced by $\X$.}

\vskip10pt
The proof of this theorem is along the idea of Beligiannis-Marmaridis \cite{Beligiannis/Marmaridis94}.  Beliannis-Marmaridis's theorem \cite[Theorem 2.12]{Beligiannis/Marmaridis94} is a special case of our Main Theorem 1. As an application, for a bicomplete abelian model category $\A$,  we see that the stable category $\A_f/\omega$ has a left triangulated structure induced by $\omega$ and $\A_c/\omega$ has a right triangulated structure induced by $\omega$. So by our Main Theorem 1, we can get more examples of ``unstable" triangulated categories. In particular,  for an abelian model category $\A$, using the left (respectively right) triangulated structure of the stable category $\A_f/\omega$ (respectively $\A_c/\omega$) we can describe the left (respectively right) triangulated structure of the homotopy category $\Ho(\A_f)$ (respectively $\Ho(\A_c)$) (See Theorem 4.12):

 \vskip10pt
\noindent{\bf Main Theorem 2: } \ {\it Let $\A $ be an abelian model category and $\omega$ the full subcategory of trivial bifibrant objects of $\A$.
\vskip5pt
$(1)$ \  The left triangulated structure of $\A_f/\omega$ induces a left triangulated structure of $\Ho(\A_f)$.

$(2)$ \ The right triangulated structure of $\A_c/\omega$ induces a right triangulated structure of $\Ho(\A_c)$.}
\vskip10pt
These left and right triangulated structures induce the pretriangulated structures of the homotopy category $\Ho(\A)$ and the stable category $\A_{cf}/\omega$ (see Corollary 4.14).

The paper is organized as follows. In Section 2, we recall the constructions of loop and suspension functors on the stable category of an additive category induced by homological finite subcategories in \cite{Beligiannis/Marmaridis94}. In Section 3.1 and 3.2, we recall the notion of the left (right) triangulated category in \cite{Beligiannis/Marmaridis94} and give the notions of a special $\X$-epic and a special $\X$-monic. In Section 3.3, after the distinguished left (right) triangles are defined on the stable category $\mcC/\X$, we prove our Main Theorem 1. In Section 4, we construct some examples which satisfy the assumptions of our main theorem from abelian model categories and prove our Main Theorem 2. In Section 5, the appendix, we show that if the stable category of an additive category is induced by some homological finite additive subcategory which is closed under direct summands, then the corresponding stable category can be realized as the homotopy category of a fibration category or cofibration category.

\vskip10pt

\section{The loop and suspension functors on the stable categories}

\vskip10pt

Let $\mcC$ be an additive category and $\X$ an additive subcategory of $\mcC$. In this section, we recall the constructions of the loop (respectively suspension) functors under the assumption that $\X$ is contravariantly (respectively covariantly) finite in $\mcC$ and any right (respectively left) $\X$-approximation has a kernel (respectively cokernel)\cite{Beligiannis/Marmaridis94}. We begin by giving some basic definitions and notations of homological finite subcategories in \cite{Auslander/Smal80}.
\vskip10pt
\subsection{Homological finite subcategories and stable categories} Let $\mcC$ be an additive category and $\X$ an additive subcategory. By a {\it right $\X$-approximation} of an object $C$ in $\mcC$ we mean a morphism $p_{_C}: X_C\to C$ with $X_C\in \X$ such that the induced map $\Hom_\mcC(X, X_C)\to \Hom_\mcC(X, C)$ is surjective for all $X\in \X$. The subcategory $\X$ is said to be {\it contravariantly finite} in $\mcC$ if each object $C$ of $\mcC$ has a right $\X$-approximation.

Dually, one can define  {\it left $\X$-approximation} of an object $C$ in $\mcC$ and $\X$ is called {\it covariantly finite} in $\mcC$ if any object of $\mcC$ has a left $\X$-approximation. We say that $\X$ is {\it functorially finite} in $\mcC$ if it is both contravariantly and covariantly in $\mcC$.

Given morphisms $f,g: C\to D$ in $\mcC$, we say that $f$ is {\it stably equivalent } to $g$, written $f\sim g$, if $f-g$ factors through some object of $\X$. It is well known that stable equivalence is an equivalence relation which is compatible with composition.

Let $\mcC/\X$ be the stable category. That is a category whose objects are the same as $\mcC$ and whose morphisms are stable equivalence classes of $\mcC$. Let $\pi: \mcC\to \mcC/\X$ be the canonical quotient functor. The image $\pi(C)$ of $C\in \mcC$ is denoted by $\underline{C}$ and the image of $\pi(f)$ of any morphism $f$ is denoted by $\underline{f}$.

\vskip10pt
\subsection{The loop and suspension functors on the stable categories}
Let $\X$ be a contraviantly fintie subcategory in an additive category $\mcC$. Suppose that any right $\X$-approximation has a kernel in $\mcC$. For each object $C$ of $\mcC$, assign a right $\X$-approximation $p_{_C}: X_C\to C$ with kernel $K_C$. Given a morphism $f: C\to D$ in $\mcC$, take right $\X$-approximations $p_{_C}$ of $C$ and $p_{_D}$ of $D$ respectively. Then there is a commutative diagram with exact rows:
\[
\xymatrix{
0\ar[r] & K_C\ar[r]^{\iota_C}\ar[d]_{\kappa_f} &X_C\ar[d]_{x_f} \ar[r]^{p_{_{C}}} & C\ar[d]_f ^{\quad (*)} \\
0 \ar[r] & K_D\ar[r]^{\iota_D} & X_D \ar[r]^{p_{_{D}}} & D.}
\]

With above notations, define a functor $\Omega: \mcC\to \mcC/\X$ as follows: $\Omega$ maps each object $C\in \mcC$ to $\ul{K}_C$ and each morphism $f:C\to D$ to $\ul{\kappa}_f$. This functor is well-defined. In fact, assume that there is another $x'_f$ such that $p_{_D}x'_f=fp_{_C}$, then $p_{_D}(x_f-x'_f)=0$. Thus there exists a morphism $l: X_C\to K_D$ such that $x_f-x'_f=\iota_D l$ since $K_D=\ker p_{_D}$. Since $x_f\iota_C=\iota_D \kappa_f$ and $x'_f\iota_C=\iota_D \kappa'_f$, we have $\iota_D(\kappa_f-\kappa'_f)=(x_f-x'_f)\iota_C=\iota_D l\iota_C$. Then $\kappa_f-\kappa'_f=l\iota_C$ since $\iota_D$ is a monomorphism. That is to say $\ul{\kappa}_f=\ul{\kappa}'_f$.

For two morphisms $f,g: C\to D$, if $\ul{f}=\ul{g}$, by definition, $f-g$ factors as a composite $C\stackrel{t}\to X\stackrel{s}\to D $ with $X\in \X$. Since $p_{_D}$ is a right $\X$-approximation, there exists a morphism $l:X\to X_D$ such that $p_{_D}l=s$. Then $p_{_D}(x_f-x_g)=(f-g)p_{_C}=stp_{_C}=p_{_D}ltp_{_C}$ and thus $p_{_D}(x_f-x_g-ltp_{_C})=0$. Similarly to the above discussion, at last we see that $\ul{\kappa}_f=\ul{\kappa}_g$. So the functor $\Omega:\mcC\to \mcC/\X$ induces a functor $\mcC/\X\to \mcC/\X$ which sends each object $\ul{C}$ to $\Omega(C)$ and each morphism $\ul{f}$ to $\Omega(f)$. We call this functor a {\it loop functor} and still denote it by $\Omega$.

Dually, suppose $\X$ is a covariantly finite subcategory of $\mcC$ and any left $\X$-approximation has a cokernel in $\mcC$. Assign each object $C\in \mcC$ a left $\X$-approximation $C\stackrel{i_C}\to X^C$ with cokernel $ K^C $, we can define a functor $\Sigma : \mcC\to \mcC/\X$ which sends object $C$ to $\ul{K}^C$. We call the induced functor on $\mcC/\X$ a {\it suspension functor} and still denote it by $\Sigma$.
\vskip10pt

\section{The left (right) triangle structures on stable categories.}

This section is devoted to giving a left (respectively right) triangle structure on the stable category $\mcC/\X$ under the assumption that every special $\X$-epic (respectively $\X$-monic) morphism has a kernel (respectively cokernel) when $\X$ is a contravariantly (respectively covariantly) finite subcategory in $\mcC$. For simplicity, we only concentrate on the discussion of the left triangulated structure on $\mcC/\X$.  We begin by recalling the definition of a left (respectively, right) triangulated category in \cite{Beligiannis/Marmaridis94}.

\subsection{The left and right triangulated categories} \ Let $\mcC$ be an additive category and $\Omega$ an additive covariant endofunctor on $\mcC$. Let $\triangle$ be a class of {\it left} triangles of the form $\Omega(A)\stackrel{h}\to C\stackrel{g}\to B\stackrel{f}\to A$. Morphisms between left triangles are triples $(\gamma, \beta,\alpha)$ of morphisms which make the following diagram commutate
\[
\xymatrix{
\Omega(A)\ar[r]^h\ar[d]_{\Omega(\alpha)} & C\ar[r]^{g}\ar[d]_{\gamma} & B\ar[d]_{\beta} \ar[r]^{f} & A\ar[d]_\alpha  \\
\Omega(A') \ar[r]^{h'} & C'\ar[r]^{g'} & B' \ar[r]^{f'} & A'.}
\]

\begin{definition} \ The pair $(\Omega, \triangle)$ is called a {\it left triangulated structure} on $\mcC$  if $\triangle$ is closed under isomorphisms and satisfies the following four axioms:
\vskip5pt
${\rm(LT1)}$ \ For any morphism $f: B\to A$ there is a left triangle in $\triangle$ of the form $\Omega(A)\to C\to B\stackrel{f}\to A$. For any object $A\in \mcC$, the left triangle $0\to A\stackrel{1_A}\to A\to 0$ is in $\triangle$.
\vskip5pt
${\rm (LT2)}$  (Rotation axiom)\ For any left triangle $\Omega(A)\stackrel{h}\to C\stackrel{g}\to B\stackrel{f}\to A$ in $\triangle$, the left triangle $\Omega{B}\stackrel{-\Omega(f)}\to\Omega(A)\stackrel{h}\to C \stackrel{g}\to B$ is also in $\triangle$.
\vskip5pt
${\rm (LT3)}$ \ For any two left triangles $\Omega(A)\stackrel{h}\to C\stackrel{g}\to B\stackrel{f}\to A,\Omega(A')\stackrel{h'}\to C'\stackrel{g'}\to B'\stackrel{f'}\to A'$ in $\triangle$ and any two morphisms $\alpha:A\to A', \beta:B\to B'$ of $\mcC$ with $\alpha f=f'\beta$, there is a morphism $\gamma: C\to C'$ of $\mcC$ such that $(\gamma, \beta, \alpha)$ is a morphism from the first left triangle to the second.

\vskip5pt
${\rm (LT4)}$ (Octahedral axiom)\ For any two morphisms $g:C\to B $ and $f:B\to A, $ let $\Omega(B)\stackrel{m}\to D\stackrel{k}\to C\stackrel{g}\to B$, $\Omega(A)\stackrel{n}\to E\stackrel{h}\to C\stackrel{fg}\to A$,$\Omega(A)\stackrel{i}\to F\stackrel{l}\to B\stackrel{f}\to A$ be the left triangles in $\triangle$ corresponding to $g, fg$ and $f$ respectively. There is a commutative diagram
\[
\xymatrix{
& \Omega(F)\ar[d]^{m\Omega(l)} &  & \\
\Omega(B)\ar[r]^m\ar[d]_{\Omega(f)} & D\ar[r]^{k}\ar[d]^{\alpha} & C\ar@{=}[d] \ar[r]^{g} & B\ar[d]_f  \\
\Omega(A) \ar[r]^{n}\ar@{=}[d] & E\ar[r]^{h}\ar[d]^\beta & C \ar[r]^{fg} \ar[d]^{g}& A\ar@{=}[d]\\
\Omega(A)\ar[r]^{i} & F\ar[r]^{l}& B\ar[r]^{f}& A}
\]
where the second column from the left is a left triangle in $\triangle$.
\end{definition}
\vskip10pt
Dually, we can define the {\it right triangulated structure} $(\Sigma, \triangle')$ on $\mcC$, where $\Sigma$ is a covariant additive endofunctor of $\mcC$ and $\triangle'$ is a class of right triangles of the form $A\to B\to C\to \Sigma(A)$ which satisfies the dual right triangulated axioms.

An additive category $\mcC$ is called a {\it left triangulated category } if there is a left triangulated structure on $\mcC$. Dually, an additive category $\mcC$ is called a {\it right triangulated category } if there is a right triangulated structure on it.

\begin{remark} \ If the endofunctor $\Omega$ (respectively, $\Sigma$) is an autoequivalence, the left (respectively, right) triangulated category $(\mcC, \Omega, \triangle)$ (respectively, $(\mcC, \Sigma, \triangle')$) is a triangulated category in the sense of Verdier in \cite{Verdier}.
\end{remark}

\subsection{Special $\X$-epic morphisms} \  Let $\X$ be a contravariantly finite subcategory of an additive category $\mcC$. Recall that a morphism $f: B\to A$ in $\mcC$ is called an {\it $\X$-epic} if for any $X\in \X$ the induced map $\Hom_\mcC(X, g): \Hom_{\mcC}(X, B)\to \Hom_{\mcC}(X,A)$ is surjective.

\begin{definition} \ A morphism $f: B\to A$ in $\mcC$ is called a {\it special  $\X$-epic }if it is of the following form: $$B\oplus X_A\stackrel{(g, p_A)}\To A$$
where $g$ is a morphism of $\mcC$ and $p_{_A}$ is a right $\X$-approximation of $A$.
 \end{definition}

 By definition, a right $\X$-apprixomation is a special $\X$-epic and  a special $\X$-epic is an $\X$-epic.
\vskip10pt

Dually, if $\X$ is covariantly finite in $\mcC$, we have the notions of an {\it $\X$-monic } and a {\it special $\X$-monic}.

\vskip10pt
Let $\mcC$ be an additive category and $\X$ an additive subcategory of $\mcC$. Assume that $\X$ is contravariantly finite in $\mcC$ and any special $\X$-epic has a kernel. Then the pullback of any morphism $f:B\to A$ along the right $\X$-approximation $p_{_A}:X_A\to A$ exists and there is a commutative diagram
\[
\xymatrix{
0  \ar[r] & K_A \ar[r]^{\zeta_{_f} \ \ }\ar@{=}[d] & {\rm PB}(f)\ar[d]_{\theta_{_f}} \ar[r]^{~~~\eta_{_f}} & B \ar[d]_f ^{\quad(**)}\\
0  \ar[r] & K_A \ar[r]^{\iota_A} & X_A \ar[r]^{p_{_{A}}} & A }
\]
with exact rows and right square being a pullback. This is since $(f, p_{_A}): B\oplus W_A\to A$ is a special $\X$-epic and thus it has a kernel by assumption. We denote its kernel by ${\rm PB}(f)$. Furthermore, in the above diagram, $\eta_{_f}$ is $\X$-epic by the universal property of pullback and the $\X$-epic property of $p_{_A}$. In fact the assumption that any special $\X$-epic has a kernel is equivalent to the existence of the pullback for any form of the diagram
 \[
\xymatrix{
   & B \ar[d]_f \\
  X_A \ar[r]^{p_{_{A}}} & A }
\]
with $p_{_A}$ being a right $\X$-approximation and $f$ a morphism in $\mcC$.

\vskip10pt
If $g: C\to B$ is an $\X$-epic with a kernel, then we have a commutative diagram with exact rows in $\mcC:$
\[
\xymatrix{
0 \ar[r]& K_B\ar[r]^{\iota_B}\ar[d]_{\gamma_g} & X_B\ar[d]_{\delta_g} \ar[r]^{p_{_{B}}} & B\ar@{=}[d]^{\quad (***)} \\
0 \ar[r] & \ker g \ar[r]^{\iota_g} & C \ar[r]^{g} & B }
\]
where the existence of $\delta_g$ is since $g$ is $\X$-epic and $X_B\in \X$. Furthermore, it is not difficult to see that $\gamma_g$ is independently on the choice of $\delta_g$ in the stable category $\mcC/\X$.

\vskip10pt
The following lemma is the tool for us to prove the octahedral axiom for the left triangulated structure in the stable category $\mcC/\X$ in the next section. Compare Lemma 2.11 of \cite{Beligiannis/Marmaridis94}.

 \begin{proposition} \ Let $\mcC$ be an additive category and $\X$ a contravariantly finite subcategory in $\mcC$. Assume that any special $\X$-epic has a kernel in $\mcC$.  Let $g: C\to B , f:B\to A$ be two morphisms and $g$ is an $\X$-epic with a kernel. Then
 \vskip 5pt
 $(i)$ there is a commutative diagram in $\mcC$ except the left above square:
\[
\xymatrix{ & K_{{\rm PB}(f)}\ar[d]_{-\gamma_{\beta}}& & \\
K_{B} \ar[d]^{\kappa_{f}}\ar[r]^{-\gamma_g \ } & \ker g \ar[r]^{\iota_g} \ar[d]^\alpha & C \ar[r]^{g} \ar[d]^{\left(\begin{smallmatrix}
1 \\
0
\end{smallmatrix}\right)} & B\ar[d]^{f} \\
K_A \ar@{=}[d]\ar[r]^{\zeta_{fg} \ } & {\rm PB}(fg) \ar[r]^{\left(\begin{smallmatrix}
\eta_{fg} \\
-\theta_{fg}
\end{smallmatrix}\right)}\ar[d]^{\beta}& C\oplus X_A\ar[r]^{ \ \ \ (fg, p_{_{A}})}\ar[d]^{\left(\begin{smallmatrix}
g& 0 \\
0 & 1
\end{smallmatrix}\right)} & A\ar@{=}[d]\\
K_A \ar[r]^{\zeta_f \ } & {\rm PB}(f)\ar[r]^{\left(\begin{smallmatrix}
\eta_{f} \\
-\theta_{f}
\end{smallmatrix}\right)}  & B\oplus X_A \ar[r]^{ \ \ \ (f, p_{_A})}  & A }
\]
where $\beta$ is $\X$-epic with kernel $\alpha$;
\vskip5pt
$(ii)$ \ $-\ul{\alpha} \ul{\gamma}_g=\ul{\zeta}_{fg} \Omega(f), \ul{\gamma}_{\beta}=\ul{\gamma}_g\Omega(\eta_f)$ in the stable category $\mcC/\X$, where $\Omega:\mcC\to \mcC$ is the loop functor defined in Section 2.2.
\end{proposition}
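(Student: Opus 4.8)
The plan is to construct the diagram in $(i)$ by a sequence of pullbacks and then read off the stable identities in $(ii)$ from the commutativity relations together with the short exact rows. First I would build the bottom two rows. By the setup preceding the proposition, since any special $\X$-epic has a kernel, the pullback of $fg\colon C\to A$ along $p_{_A}$ exists and yields the row with ${\rm PB}(fg)$, and likewise the pullback of $f\colon B\to A$ along $p_{_A}$ yields the row with ${\rm PB}(f)$; both rows are short exact with kernel $K_A$ (diagram $(**)$). The vertical map $\left(\begin{smallmatrix} g& 0\\ 0& 1\end{smallmatrix}\right)\colon C\oplus X_A\to B\oplus X_A$ is compatible with the two augmentations to $A$, so by the universal property of the pullback ${\rm PB}(f)$ there is an induced $\beta\colon {\rm PB}(fg)\to {\rm PB}(f)$ making the lower right rectangle commute; since it restricts to $\id_{K_A}$ on kernels, the lower left square commutes as well. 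The key point here is that $\beta$ is $\X$-epic with kernel $\alpha$: $\X$-epicness follows from the fact that $\left(\begin{smallmatrix} g& 0\\ 0& 1\end{smallmatrix}\right)$ is $\X$-epic (because $g$ is $\X$-epic and $\id_{X_A}$ is an isomorphism) together with the pullback square $(**)$ for $f$, exactly as in the remark that $\eta_{_f}$ is $\X$-epic; and the kernel of $\beta$ can be identified, via a diagram chase on the commuting squares, with the kernel of $\left(\begin{smallmatrix} g& 0\\ 0& 1\end{smallmatrix}\right)$ pulled back, which is $\ker g$ embedded as $\left(\begin{smallmatrix} 1\\ 0\end{smallmatrix}\right)\iota_g$. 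This gives the map $\alpha\colon \ker g\to {\rm PB}(fg)$ and the commuting middle square $\beta\alpha$-column.

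Next I would produce the top portion. Since $g$ is $\X$-epic with a kernel, diagram $(***)$ applies and gives $\gamma_g\colon K_B\to \ker g$ together with $\delta_g\colon X_B\to C$ with $g\delta_g=p_{_B}$; this yields the top row $K_B\xrightarrow{-\gamma_g}\ker g\xrightarrow{\iota_g} C\xrightarrow{g} B$ (the sign is a bookkeeping choice matching (LT2)). The map $\kappa_f\colon K_B\to K_A$ is the one from diagram $(*)$ in Section 2.2 attached to $f\colon B\to A$, i.e. $\Omega(f)=\ul\kappa_f$; commutativity of the square $K_B\to \ker g\to {\rm PB}(fg)$ versus $K_B\to K_A\to {\rm PB}(fg)$ is where I expect to have to be careful — it will only commute up to a map factoring through $\X$, or after a suitable choice of the lift $x_f$ in $(*)$, which is exactly why part $(ii)$ is phrased with underlines. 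Finally, applying the kernel construction once more to the $\X$-epic $\beta$ (which has kernel $\alpha$) and comparing with the right $\X$-approximation of ${\rm PB}(f)$, diagram $(***)$ for $\beta$ produces $\gamma_\beta\colon K_{{\rm PB}(f)}\to \ker\beta=\ker g$, giving the top vertical map $-\gamma_\beta$ and completing the diagram except for the left-upper square, which is not asserted to commute.

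For part $(ii)$, the identity $-\ul\alpha\,\ul\gamma_g=\ul\zeta_{fg}\,\Omega(f)$ should follow by composing the top-left square with the left-hand column: both sides represent the two ways of going $K_B\to \ker g\to {\rm PB}(fg)$ and $K_B\to K_A\to {\rm PB}(fg)$, and since these agree in $\mcC$ only up to $\X$ one passes to $\mcC/\X$; here $\Omega(f)=\ul\kappa_f$ and $\ul\zeta_{fg}$ is the stabilized inclusion. The identity $\ul\gamma_\beta=\ul\gamma_g\,\Omega(\eta_f)$ comes from the defining property of $\gamma_\beta$ via $(***)$ applied to $\beta$: one chases that the composite $K_{{\rm PB}(f)}\to X_{{\rm PB}(f)}\to {\rm PB}(f)\xrightarrow{\eta_f} B$ agrees with $p_{_B}$ up to $\X$, and that $\eta_f$ restricted appropriately relates $K_{{\rm PB}(f)}$ to $K_B$ via $\Omega(\eta_f)=\ul\kappa_{\eta_f}$, so that the $\gamma$-maps match after stabilization. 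The main obstacle throughout is not any single step but the careful tracking of signs and of which equalities hold strictly in $\mcC$ versus only in $\mcC/\X$; the well-definedness arguments for $\Omega$ already given in Section 2.2 (independence of the lift up to stable equivalence) are the template, and I would invoke them repeatedly rather than re-proving the relevant independence each time. This mirrors the proof of Lemma 2.11 in \cite{Beligiannis/Marmaridis94}, with $\X$-epics replaced by the more permissive special $\X$-epics where kernels are guaranteed.
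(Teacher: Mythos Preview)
Your proposal is correct and follows essentially the same route as the paper: build the bottom two rows as the pullback diagrams $(**)$ for $fg$ and $f$, induce $\beta$ from $\left(\begin{smallmatrix} g & 0 \\ 0 & 1\end{smallmatrix}\right)$ by the universal property, verify it is $\X$-epic with kernel $\alpha\colon\ker g\hookrightarrow{\rm PB}(fg)$, and then for $(ii)$ show that $\zeta_{fg}\kappa_f+\alpha\gamma_g$ and $\gamma_\beta-\gamma_g\kappa_{\eta_f}$ factor through objects of $\X$. The paper executes $(ii)$ by explicit matrix computations, testing each difference against the monomorphisms $\left(\begin{smallmatrix}\eta_{fg}\\-\theta_{fg}\end{smallmatrix}\right)$ and $\left(\begin{smallmatrix}\eta_{fg}\\-\theta_{fg}\end{smallmatrix}\right)\alpha$ to exhibit the factorizations through $X_B$ and $X_{{\rm PB}(f)}$ respectively; this is precisely what your diagram chases would unwind to, though your phrasing of the second identity (``agrees with $p_{_B}$ up to $\X$'') is a bit loose and should be replaced by the direct comparison $\gamma_\beta$ versus $\gamma_g\kappa_{\eta_f}$.
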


\begin{proof} $(i)$ \ The kernels  $\ker g, {\rm PB}(fg) \ \mbox{and} \ {\rm PB}(f)$  exist by assumption. The morphisms $\kappa_f, p_{_A}$ are as defined in $(*)$ in Section 2.2 and $\eta_{_f}, \theta_{f}, \zeta_{_f}, \eta_{fg},$
$\theta_{fg}, \zeta_{{fg}}, \gamma_g$ are as defined in $(**),(***)$ in Section 3.2. By the universal property of kernels there are morphisms $\alpha, \beta$ such that the middle squares are commutative. The right squares are obviously commutative.
Note that $\left(\begin{smallmatrix}
g& 0 \\
0 & 1
\end{smallmatrix}\right)$ is $\X$-epic, we know that $\beta$ is also $\omega$-epic because of the following commutative diagram with exact rows:
\[
\xymatrix{
0  \ar[r] & (\X,{\rm PB}(fg)) \ar[r]^{{\left(\begin{smallmatrix}
\eta_{fg} \\
-\theta_{fg}
\end{smallmatrix}\right)_*} }\ar[d]^{\beta_*} & (\X, C\oplus X_A)\ar[d]^{\left(\begin{smallmatrix}
g& 0 \\
0 & 1
\end{smallmatrix}\right)_*} \ar[r]^{\ \ \ \ (fg, p_{_A})_*} & (\X,A) \ar@{=}[d]\ar[r] & 0 \\
0  \ar[r] &(\X, {\rm PB}(f)) \ar[r]_{\left(\begin{smallmatrix}
\eta_{f} \\
-\theta_{f}
\end{smallmatrix}\right)_*} & (\X, B\oplus X_A) \ar[r]_{\ \ \ \ (f, p_{_A})_*} &(\X, A) \ar[r]& 0 }
\]
where $(\X,-)$ denotes $\Hom_\mcC(X,-)$ for any $X\in \X$.

 Since $\left(\begin{smallmatrix}
\eta_{f} \\
-\theta_{f}
\end{smallmatrix}\right)\beta \alpha=\left(\begin{smallmatrix}
g& 0 \\
0 & 1
\end{smallmatrix}\right)\left(\begin{smallmatrix}
1 \\
0
\end{smallmatrix}\right)\iota_g=\left(\begin{smallmatrix}
g\iota_g \\
0
\end{smallmatrix}\right)=0$ and $\left(\begin{smallmatrix}
\eta_{f} \\
-\theta_{f}
\end{smallmatrix}\right)$ is a monomorphism, we know that $\beta \alpha=0$. Assume that there is a morphism $h: D\to {\rm PB}(fg)$ such that $\beta h=0$. Then $0=\left(\begin{smallmatrix}
\eta_{f} \\
-\theta_{f}
\end{smallmatrix}\right)\beta h=\left(\begin{smallmatrix}
g& 0 \\
0 & 1
\end{smallmatrix}\right)\left(\begin{smallmatrix}
\eta_{fg} \\
-\theta_{fg}
\end{smallmatrix}\right)h=\left(\begin{smallmatrix}
g\eta_{fg}h \\
-\theta_{fg}h
\end{smallmatrix}\right)=0$, i.e. $g(\eta_{fg}h)=0$ and $-\theta_{fg}h=0$. Thus there is a morphism $l:D\to \ker g$ such that $\iota_g l=\eta_{fg}h$. So we have $\left(\begin{smallmatrix}
\eta_{fg} \\
-\theta_{fg}
\end{smallmatrix}\right)\alpha l=\left(\begin{smallmatrix}
1\\
0
\end{smallmatrix}\right)\iota_g l=\left(\begin{smallmatrix}
\eta_{fg} \\
-\theta_{fg}
\end{smallmatrix}\right)h$. Since $\left(\begin{smallmatrix}
\eta_{fg} \\
-\theta_{fg}
\end{smallmatrix}\right)$ is a monomorphism, we have $\alpha l=h$, i.e. $\alpha$ is the kernel of $\beta$.

By the commutative diagram $(**)$ in Section 3.2, we have $\theta_f\zeta_f=\iota_A=\theta_{fg}\zeta_{fg}$ and then $\left(\begin{smallmatrix}
\eta_{f} \\
-\theta_{f}
\end{smallmatrix}\right)(\zeta_f-\beta\zeta_{fg})=\left(\begin{smallmatrix}
-\eta_{f}\beta \zeta_{fg} \\
-\theta_{f}\zeta_f+\theta_f\beta\zeta_{fg}
\end{smallmatrix}\right)=\left(\begin{smallmatrix}
-g\eta_{fg} \zeta_{fg} \\
-\iota_A+\theta_{fg}\zeta_{fg}
\end{smallmatrix}\right)=\left(\begin{smallmatrix}
0 \\
\iota_A-\iota_A
\end{smallmatrix}\right)=0$. Since $\left(\begin{smallmatrix}
\eta_{f} \\
-\theta_{f}
\end{smallmatrix}\right)$ is a monomorphism we know that $\zeta_f=\beta\zeta_{fg}$.

\vskip5pt
$(ii)$ \ By the commutative diagrams $(*),(**), (***)$, we have

\noindent$\left(\begin{smallmatrix}
\eta_{fg}  \\
-\theta_{fg}
\end{smallmatrix}\right)(\zeta_{fg} \kappa_f+\alpha \gamma_g)=\left(\begin{smallmatrix}
\iota_g\gamma_g \\
-\theta_{fg}\zeta_{fg}\kappa_f
\end{smallmatrix}\right)=\left(\begin{smallmatrix}
\delta_g \iota_{B}  \\
-\iota_{_A} \kappa_f
\end{smallmatrix}\right)=\left(\begin{smallmatrix}
\delta_g  \\
-x_f
\end{smallmatrix}\right)\iota_{B}$. Since $(fg, p_{_A}) \left(\begin{smallmatrix}
\delta_g  \\
-x_f
\end{smallmatrix}\right)= fg\delta_g-x_fp_{_A}=fp_{_B}-fp_{_B}=0$, there is a morphism $s: X_B\to {\rm PB}(fg)$ such that $\left(\begin{smallmatrix}
\delta_g  \\
-x_f
\end{smallmatrix}\right)=\left(\begin{smallmatrix}
\eta_{fg}  \\
-\theta_{fg}
\end{smallmatrix}\right)s$. Then we have $\left(\begin{smallmatrix}
\eta_{fg}  \\
-\theta_{fg}
\end{smallmatrix}\right)(\zeta_{fg} \kappa_f+\alpha \gamma_g)=\left(\begin{smallmatrix}
\eta_{fg}  \\
-\theta_{fg}
\end{smallmatrix}\right)s\iota_{B}$ and thus $\zeta_{fg} \kappa_f+\alpha \gamma_g=s\iota_{B}$ since $\left(\begin{smallmatrix}
\eta_{fg}  \\
-\theta_{fg}
\end{smallmatrix}\right)$ is a monomorphism. That is to say $\zeta_{fg} \kappa_f+\alpha \gamma_g$ factors through an object in $\X$. So $-\ul{\zeta}_{fg}\Omega(f)=\ul{\zeta}_{fg} \ul{\kappa}_f=\ul{\alpha} \ul{\gamma}_g$ in $\mcC/\X$.

Now consider the commutative diagrams:

\[
\xymatrix{
 0\ar[r] &  K_{{\rm PB}(f)}\ar[r]^{\iota_{{\rm PB}(f)}}\ar[d]_{\gamma_{_\beta}} & X_{{\rm PB}(f)}\ar[d]_{\delta_\beta} \ar[r]^{p_{_{{\rm PB}(f)}}} & {\rm PB}(f)\ar@{=}[d] \\
 0\ar[r] &  {\rm PB}(g) \ar[r]^{\alpha} & {\rm PB}(fg) \ar[r]^{\beta} & {\rm PB}(f)}
\]
and
\[
\xymatrix{
  0\ar[r] &  K_{{\rm PB}(f)}\ar[r]^{\iota_{{\rm PB}(f)}}\ar[d]_{\kappa_{\eta_f}} & X_{{\rm PB}(f)}\ar[d]_{x_{\eta_f}} \ar[r]^{p_{_{{\rm PB}(f)}}} & {\rm PB}(f)\ar[d]^{\eta_f} \\
  0\ar[r] &  K_{B}\ar[r]^{\iota_{B}} & X_{B}\ar[r]^{p_{_B}} & B.}
\]
Since $g(\eta_{fg}\delta_\beta-\delta_gx_{\eta_f})=\eta_{f}\beta \delta_\beta -p_{_B}x_{\eta_{_f}}=\eta_{f}p_{_{\rm PB}(f)}-p_{_B}x_{\eta_{_f}}=0$, there exists a morphism $t: X_{{\rm PB}(f)}\to {\rm PB}(g)$ such that $\iota_g t=\eta_{fg}\delta_\beta-\delta_gx_{\eta_f}$. Then

\begin{align*}
\left(\begin{smallmatrix}
\eta_{fg} \\
-\theta_{_fg}
\end{smallmatrix}\right)\alpha(\gamma_\beta-\gamma_g \kappa_{\eta_f}) = & \left(\begin{smallmatrix}
\eta_{fg}\delta_\beta\iota_{{\rm PB}(f)}-\iota_g\gamma_g\kappa_{\eta_f} \\
0
\end{smallmatrix}\right) \\
=&\left(\begin{smallmatrix}
\eta_{fg}\delta_\beta\iota_{{\rm PB}(f)}-\delta_g\iota_B\kappa_{\eta_f} \\
0
\end{smallmatrix}\right) \\
=&\left(\begin{smallmatrix}
\eta_{fg}\delta_\beta\iota_{{\rm PB}(f)}-\delta_gx_{\eta_f}\iota_{{\rm PB}(f)} \\
0
\end{smallmatrix}\right)  \\
=&\left(\begin{smallmatrix}
\iota_g t \\
0
\end{smallmatrix}\right) \iota_{{\rm PB}(f)}\\
=&\left(\begin{smallmatrix}
\eta_{fg}\\
-\theta_{fg}
\end{smallmatrix}\right)\alpha t \iota_{{\rm PB}(f)}.
\end{align*}
Since $\left(\begin{smallmatrix}
\eta_{fg}\\
-\theta_{fg}
\end{smallmatrix}\right)\alpha$ is a monomorphism, we know that $\gamma_\beta-\gamma_g \kappa_{\eta_f}=t \iota_{{\rm PB}(f)}$, i.e. the morphism $\gamma_\beta-\gamma_g \kappa_{\eta_f}$ factors through one object in $\X$. So $\ul{\gamma}_{\beta}=-\ul{\gamma}_g \ul{\kappa}_{\eta_f}=\ul{\gamma}_g\Omega(\eta_f)$.

\end{proof}

\subsection{The left triangulated structure on $\mcC/\X$ }

For any morphism $f: B\to A$ in $\mcC$, there always has a commutative diagram as constructed in $(**)$:
\[
\xymatrix{
0  \ar[r] & K_A\ar[r]^{\zeta_f \ }\ar@{=}[d] & {\rm PB}(f)\ar[d]_{  \theta_f} \ar[r]^{~~~~~\eta_f} & B \ar[d]_f \\
0  \ar[r] & K_A \ar[r]^{\iota_A} & X_A \ar[r]^{p_{_{A}}} & A. }
\]
 The morphism $f$ induces a left triangle of the form $\Omega(A)\stackrel{\ul{\zeta}_f}\to \ul{\rm PB}(f)\stackrel{\ul{\eta}_f}\to \ul{B}\stackrel{\ul{f}}\to \ul{A} $ in the stable category $\mcC/\X$. The left triangles which are isomorphic to the above forms are called {\it distinguished left triangles}.

  If $f:A\to B$ is an $\X$-epic with a kernel, then we also have another commutative diagram as constructed in $(***)$:
 \[
\xymatrix{
0 \ar[r]& K_A\ar[r]^{\iota_B}\ar[d]_{\gamma_f} & X_A\ar[d]_{\delta_f} \ar[r]^{p_{_{A}}} & A\ar@{=}[d] \\
0 \ar[r] & \ker f\ar[r]^{\iota_f} & B \ar[r]^{f} & A }
\]
and $\gamma_f$ is unique in the stable category $\mcC/\X$. That is to say such morphism $f$ induces another left triangle $\Omega(A)\stackrel{-\ul{\gamma}_f}\to \ul{\ker}(f)\stackrel{\ul{\iota}_f}\to \ul{B}\stackrel{\ul{f}}\to \ul{A} $ in $\mcC/\X$. The left triangles which are isomorphic to the above forms are called {\it induced left triangles}.

\vskip10pt
The next lemma shows that the distinguished left triangles and induced left triangles are the same.

\vskip10pt
\begin{lemma} \cite[Proposition 2.10]{Beligiannis/Marmaridis94} \ Any distinguished left triangle is induced one and any induced left triangle is distinguished one.
\end{lemma}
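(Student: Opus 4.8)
The plan is to prove the two inclusions separately, and in each case exhibit an explicit isomorphism of left triangles in $\mcC/\X$ connecting a distinguished left triangle with an induced one.

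First I would show every distinguished left triangle is an induced one. Start with a morphism $f\colon B\to A$ and its associated diagram $(**)$, giving the distinguished left triangle $\Omega(A)\stackrel{\ul{\zeta}_f}\to \ul{\rm PB}(f)\stackrel{\ul{\eta}_f}\to \ul{B}\stackrel{\ul{f}}\to \ul{A}$. The key observation is that $\eta_f\colon {\rm PB}(f)\to B$ is itself an $\X$-epic with a kernel: indeed $\eta_f$ is $\X$-epic by the universal property of the pullback $(**)$ together with the $\X$-epic property of $p_{_A}$, and its kernel is $\zeta_f(K_A)$, i.e. $\ker\eta_f\cong K_A$ via $\zeta_f$ because the left square of $(**)$ is a pullback square with $\iota_A$ a kernel. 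Thus $\eta_f$ also induces an induced left triangle $\Omega(B)\stackrel{-\ul{\gamma}_{\eta_f}}\to \ul{\ker}(\eta_f)\stackrel{\ul{\iota}_{\eta_f}}\to \ul{\rm PB}(f)\stackrel{\ul{\eta}_f}\to \ul{B}$, but this is not yet the triangle I want. Instead I would directly build the comparison: form the diagram $(***)$ for the $\X$-epic $\eta_f$ and compare it with $(**)$. More efficiently, I would observe that the pullback square $(**)$ is itself an instance of diagram $(***)$: reading $(**)$ with the roles rearranged, $\eta_f$ is an $\X$-epic onto $B$, and $\theta_f\colon{\rm PB}(f)\to X_A$ together with the identity on $K_A$ provides exactly the data of a morphism from the right $\X$-approximation sequence of $B$ — wait, more carefully: $p_{_A}$ is a right $\X$-approximation of $A$, not of $B$. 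So instead I compare $\eta_f$ to a fixed right $\X$-approximation $p_{_B}\colon X_B\to B$, obtaining $\delta\colon X_B\to{\rm PB}(f)$ with $\eta_f\delta=p_{_B}$ and hence $\gamma_{\eta_f}\colon K_B\to\ker\eta_f\cong K_A$; I must then check that under the identification $\ker\eta_f\cong K_A$ the connecting map $-\ul{\gamma}_{\eta_f}\colon\Omega(B)\to\ul{K}_A$ agrees up to the relevant rotation with $\ul{\zeta}_f$ composed appropriately — this is where Proposition 3.5(ii), specifically the relation $-\ul{\alpha}\,\ul{\gamma}_g=\ul{\zeta}_{fg}\Omega(f)$, together with the rotation axiom is the engine. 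The cleanest route is actually to invoke that $\ul{\rm PB}(f)\cong\Omega(A)\oplus(\text{zero in }\mcC/\X)$-type reasoning is false in general, so I will instead produce the isomorphism of triangles by hand: construct $\ul{\rm PB}(f)\xrightarrow{\sim}\ul{\ker}(\eta_f')$ for a suitable $\X$-epic $\eta_f'$ with the same target, using that two special $\X$-epics covering the same morphism have stably isomorphic kernels.

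Second, the reverse inclusion: given an $\X$-epic $f\colon A\to B$ with a kernel, producing the induced left triangle $\Omega(B)\stackrel{-\ul{\gamma}_f}\to\ul{\ker}(f)\stackrel{\ul{\iota}_f}\to\ul{A}\stackrel{\ul{f}}\to\ul{B}$, I must show it is isomorphic to the distinguished triangle $\Omega(B)\stackrel{\ul{\zeta}_f}\to\ul{\rm PB}(f)\stackrel{\ul{\eta}_f}\to\ul{A}\stackrel{\ul{f}}\to\ul{B}$ attached to $f$ by diagram $(**)$. Here I would use the universal property of the pullback ${\rm PB}(f)$ in $(**)$: the square with $\iota_f\colon\ker f\to A$ and the zero map $\ker f\to X_B$ commutes over $B$ (since $f\iota_f=0=p_{_B}\cdot 0$), so there is a unique $u\colon\ker f\to{\rm PB}(f)$ with $\eta_f u=\iota_f$ and $\theta_f u=0$. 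Symmetrically, $\delta_f\colon X_B\to A$ from $(***)$ together with $p_{_B}$ gives a map, and combining with the kernel comparison $\gamma_f$ I get a candidate inverse $\ul{v}\colon\ul{\rm PB}(f)\to\ul{\ker}(f)$. Then I check $(\ul u,\ul{\rm id}_A,\ul{\rm id}_B)$ is a morphism of left triangles — the only nontrivial square being $\Omega(B)\to\ker f$ versus $\Omega(B)\to{\rm PB}(f)$, i.e. $\ul{\zeta}_f=\ul u\cdot(-\ul{\gamma}_f)$ — and that $\ul u$ is an isomorphism in $\mcC/\X$, with inverse built from $\theta_f$ and $\delta_f$; this amounts to verifying $\ul u\ul v=\ul{\rm id}$ and $\ul v\ul u=\ul{\rm id}$, each reducing to a factorization through an object of $\X$ via the monomorphism properties already exploited throughout Section 3.2.

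The main obstacle I anticipate is the bookkeeping around the connecting maps: showing that the two candidate connecting morphisms $\Omega(B)\to\ul{\rm PB}(f)$ (resp. $\Omega(B)\to\ul{\ker}(f)$) really do commute with the comparison isomorphism in $\mcC/\X$, rather than merely in $\mcC$ up to a map factoring through $\X$. Unlike the outer squares, which commute on the nose in $\mcC$, these squares only commute stably, and pinning down the sign and the precise factorization requires careful use of diagrams $(*)$, $(**)$, $(***)$ simultaneously — exactly the kind of diagram chase that Proposition 3.5 was set up to package. I expect the argument will largely parallel the proof of \cite[Proposition 2.10]{Beligiannis/Marmaridis94}, with the only genuine new point being that one must check the constructions go through using \emph{special} $\X$-epics (which have kernels by hypothesis) in place of arbitrary $\X$-epics, and that $\eta_f$ in $(**)$ is of the required special type — which it is, being (up to the direct summand $X_A$) the defining data of a special $\X$-epic.
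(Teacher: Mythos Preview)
The paper does not give its own proof of this lemma; it simply cites \cite[Proposition 2.10]{Beligiannis/Marmaridis94}. So there is no in-paper argument to compare against, and your proposal must be judged on its own merits.

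Your second direction (induced $\Rightarrow$ distinguished) is essentially correct and is the standard argument: for an $\X$-epic $f\colon B\to A$ with kernel, the morphism $\theta_f\colon{\rm PB}(f)\to X_A$ is a split epimorphism (split by the map $X_A\to{\rm PB}(f)$ induced from $(\delta_f,1_{X_A})$) with kernel $\ker f$, so ${\rm PB}(f)\cong \ker f\oplus X_A$ in $\mcC$ and hence $\ul{\rm PB}(f)\cong\ul{\ker}f$ in $\mcC/\X$. Under this identification $\ul{\eta}_f$ becomes $\ul{\iota}_f$ and $\ul{\zeta}_f$ becomes $-\ul{\gamma}_f$, exactly as you outline.

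Your first direction (distinguished $\Rightarrow$ induced), however, has a genuine gap. You try to compare the distinguished triangle of $f\colon B\to A$ with the \emph{induced} triangle of $\eta_f\colon{\rm PB}(f)\to B$. But the induced triangle of $\eta_f$ is $\Omega(B)\to \ul{K}_A\to\ul{\rm PB}(f)\to\ul{B}$, which is the \emph{rotation} of the distinguished triangle of $f$, not that triangle itself. Your appeal to ``the rotation axiom is the engine'' is circular: in this paper (and in \cite{Beligiannis/Marmaridis94}) the rotation axiom (LT2) is proved precisely by showing the rotated triangle is an induced triangle and then invoking the present lemma to conclude it is distinguished. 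So you cannot use (LT2) here. The vague fallback (``produce the isomorphism of triangles by hand \dots\ using that two special $\X$-epics covering the same morphism have stably isomorphic kernels'') does not identify which $\X$-epic you mean.

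The clean fix is short: the special $\X$-epic $(f,p_{_A})\colon B\oplus X_A\to A$ has kernel exactly ${\rm PB}(f)$ by construction, so its induced triangle is
\[
\Omega(A)\xrightarrow{-\ul{\gamma}_{(f,p_{_A})}}\ul{\rm PB}(f)\xrightarrow{\ul{\left(\begin{smallmatrix}\eta_f\\-\theta_f\end{smallmatrix}\right)}}\ul{B\oplus X_A}\xrightarrow{\ul{(f,p_{_A})}}\ul{A},
\]
and the stable isomorphism $\ul{B}\cong\ul{B\oplus X_A}$ (via $\left(\begin{smallmatrix}1\\0\end{smallmatrix}\right)$) carries this to the distinguished triangle $\Omega(A)\to\ul{\rm PB}(f)\to\ul{B}\to\ul{A}$; one checks $-\ul{\gamma}_{(f,p_{_A})}=\ul{\zeta}_f$ directly from diagram $(***)$. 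Finally, your closing remark that $\eta_f$ ``is of the required special type'' is not accurate: $\eta_f$ is $\X$-epic with a kernel, but it is not in general a special $\X$-epic in the sense of Definition~3.3.
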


\vskip10pt
We denote by $\triangle_\X$ the class of distinguished left triangles.
\vskip10pt
Dually, if $\X$ is covariantly finite in $\mcC$ and each special $\X$-monic has a cokernel. For a morphism $g: C\to D$, we construct a right triangle as $\ul{C}\stackrel{\ul{g}}\to \ul{D}\stackrel{\ul{\mu}^g}\to \ul{{\rm PO}}(g)\stackrel{\ul{\pi}^g}\to \Sigma(C) $  by the commutative diagram in $\mcC$
\[
\xymatrix{
C  \ar[r]^{\nu^C} \ar[d]_g & X^C\ar[r]^{\pi^C \ }\ar[d] & \Sigma(C)\ar@{=}[d] \ar[r] & 0  \\
D  \ar[r]^{\mu^g} & {\rm PO}(g) \ar[r]^{\pi^g} & \Sigma(C) \ar[r] & 0 }
\]
with exact rows and the left square being a pushout. Define the distinguished right triangles as those which are isomorphic to the ones as the above form.   The class of distinguished right triangles is denoted by $\triangle^\X$.  We have the following theorem:
\vskip10pt
\begin{theorem} \ Let $\mcC$ be an additive category and $\X$ an additive subcategory.
\vskip5pt
$(i)$ \ If $\X$ is contravariantly finite in $\mcC$ and every special $\X$-epic morphism has a kernel. Then $(\mcC/\X, \Omega, \triangle_\X)$ is a left triangulated category.

$(ii)$ \ Dually, if $\X$ is covariantly finite in $\mcC$ and every special $\X$-monic morphism has a cokernel. Then $(\mcC/\X, \Sigma, \triangle^\X)$ is a right triangulated category.
\end{theorem}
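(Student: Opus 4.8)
The plan is to prove part (i); part (ii) follows by the evident dualization, which interchanges kernels with cokernels, pullbacks with pushouts, and right $\X$-approximations with left ones, turning every construction of Section~2.2 and Section~3.2 into its opposite. That $\triangle_\X$ is closed under isomorphism holds by definition, and the functor $\Omega$ of Section~2.2 is readily checked to be additive (take $K_{C\oplus C'}=K_C\oplus K_{C'}$), so it remains to verify the axioms (LT1)--(LT4) for $(\mcC/\X,\Omega,\triangle_\X)$. Throughout I will use that every object of $\X$ is a zero object in $\mcC/\X$, so that the split inclusion $\ul C\to\ul{C\oplus X}$ is an isomorphism whenever $X\in\X$, and I will pass freely between the ``distinguished'' and ``induced'' descriptions of a left triangle by Lemma~3.5.

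Axiom (LT1) is immediate: for $f\colon B\to A$ the diagram $(**)$ exists by hypothesis and yields the distinguished left triangle $\Omega(A)\stackrel{\ul\zeta_f}{\to}\ul{{\rm PB}}(f)\stackrel{\ul\eta_f}{\to}\ul B\stackrel{\ul f}{\to}\ul A$; for $0\to A\stackrel{1_A}{\to}A\to 0$ one applies $(**)$ to the zero morphism $A\to 0$ with the choice $X_0=0$, so that ${\rm PB}(f)=A$, $\eta_f=1_A$, $K_0=0$. For (LT2), since rotation commutes with isomorphisms of left triangles it suffices to rotate a standard triangle $\Omega(A)\stackrel{\ul\zeta_f}{\to}\ul{{\rm PB}}(f)\stackrel{\ul\eta_f}{\to}\ul B\stackrel{\ul f}{\to}\ul A$ coming from $(**)$. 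By the remark following $(**)$, $\eta_f$ is $\X$-epic, and its kernel is $\zeta_f\colon K_A\to{\rm PB}(f)$ by exactness of the top row of $(**)$; hence $(***)$ applies to $\eta_f$ and produces the induced (so by Lemma~3.5 distinguished) left triangle $\Omega(B)\stackrel{-\ul\gamma_{\eta_f}}{\to}\ul{K_A}\stackrel{\ul\zeta_f}{\to}\ul{{\rm PB}}(f)\stackrel{\ul\eta_f}{\to}\ul B$. It remains to check $\ul\gamma_{\eta_f}=\ul\kappa_f=\Omega(f)$: if $\delta_{\eta_f}\colon X_B\to{\rm PB}(f)$ lifts $p_B$ along $\eta_f$, then $\theta_f\delta_{\eta_f}$ is a legitimate choice of the map $x_f$ in diagram $(*)$ for $f$ (since $p_A\theta_f\delta_{\eta_f}=f\eta_f\delta_{\eta_f}=fp_B$), and then $\iota_A\kappa_f=\theta_f\delta_{\eta_f}\iota_B=\theta_f\zeta_f\gamma_{\eta_f}=\iota_A\gamma_{\eta_f}$ using $\theta_f\zeta_f=\iota_A$; as $\iota_A$ is monic, $\kappa_f=\gamma_{\eta_f}$ for this choice, and both sides are well defined in $\mcC/\X$, so the rotated triangle coincides with the induced one and is distinguished.

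For (LT3), replacing the two given triangles by isomorphic standard ones we may assume they are $\Omega(A)\stackrel{\ul\zeta_f}{\to}\ul{{\rm PB}}(f)\stackrel{\ul\eta_f}{\to}\ul B\stackrel{\ul f}{\to}\ul A$ and its primed analogue. Lift $\ul\alpha,\ul\beta$ to morphisms $\alpha,\beta$ of $\mcC$; from $\ul\alpha\ul f=\ul{f'}\ul\beta$ one gets a factorization $\alpha f-f'\beta=st$ through an object of $\X$, and since $p_{A'}$ is $\X$-epic one may write $s=p_{A'}s'$ and $\alpha p_A=p_{A'}u$. Then $f'(\beta\eta_f)=p_{A'}(u\theta_f-s't\eta_f)$, so the universal property of the pullback square in $(**)$ for $f'$ gives $\gamma\colon{\rm PB}(f)\to{\rm PB}(f')$ with $\eta_{f'}\gamma=\beta\eta_f$ and $\theta_{f'}\gamma=u\theta_f-s't\eta_f$. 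The $\ul\beta$-square commutes on the nose; for the $\Omega(\alpha)$-square note $\eta_{f'}\gamma\zeta_f=\beta\eta_f\zeta_f=0$, so $\gamma\zeta_f=\zeta_{f'}v$ for a unique $v\colon K_A\to K_{A'}$, and $\iota_{A'}v=\theta_{f'}\gamma\zeta_f=u\iota_A$; since $u$ is a legitimate choice of $x_\alpha$ in $(*)$ for $\alpha$, we get $v=\kappa_\alpha$ for that choice, hence $\ul\gamma\,\ul\zeta_f=\ul\zeta_{f'}\,\Omega(\alpha)$ in $\mcC/\X$, and $(\ul\gamma,\ul\beta,\ul\alpha)$ is the desired morphism of left triangles.

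Axiom (LT4) is the main obstacle, and the strategy is to reduce it to a direct application of Proposition~3.4. Given $g\colon C\to B$ and $f\colon B\to A$, first replace $g$ by the special $\X$-epic $\tilde g=(g,p_B)\colon C\oplus X_B\to B$ (and $fg$ by $f\tilde g$); since $\ul{C\oplus X_B}\cong\ul C$ compatibly with $\ul g$ and $\ul{fg}$, and since by (LT1)--(LT3) any two distinguished left triangles with a prescribed third morphism are isomorphic, it suffices to produce the octahedral diagram for the pair $\tilde g,f$ and then transport it along this isomorphism. Now $\tilde g$ is an $\X$-epic possessing a kernel, so Proposition~3.4 furnishes its commutative diagram $(i)$: its three rows are (isomorphic copies of) the left triangles attached to $g$, $fg$ and $f$; its ``second column'' $\Omega({\rm PB}(f))\stackrel{-\ul\gamma_\beta}{\to}\ul{\ker\tilde g}\stackrel{\ul\alpha}{\to}\ul{{\rm PB}}(f\tilde g)\stackrel{\ul\beta}{\to}\ul{{\rm PB}}(f)$ is, because $\beta$ is $\X$-epic with kernel $\alpha$, an induced and hence (Lemma~3.5) distinguished left triangle; and the identities $-\ul\alpha\,\ul\gamma_{\tilde g}=\ul\zeta_{f\tilde g}\,\Omega(f)$ and $\ul\gamma_\beta=\ul\gamma_{\tilde g}\,\Omega(\eta_f)$ of Proposition~3.4(ii) give, respectively, commutativity of the remaining square of the octahedron and the fact that the first map of that left triangle equals $-\ul\gamma_{\tilde g}\circ\Omega(\eta_f)=m\circ\Omega(l)$ in the notation of (LT4). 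Transporting back along $\ul{C\oplus X_B}\cong\ul C$ assembles the octahedral diagram demanded by (LT4). The only genuine labour lies in this last step: keeping track of the various identifications $\ul{C\oplus X}\cong\ul C$ and of the signs, and verifying that every square of the resulting diagram commutes in $\mcC/\X$; all the conceptual content is already packaged in Proposition~3.4 and Lemma~3.5.
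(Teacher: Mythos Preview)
Your proposal is correct and follows essentially the same route as the paper. The paper defers (LT1)--(LT3) entirely to \cite{Beligiannis/Marmaridis94} and only writes out (LT4), where it makes exactly your reduction---replacing $g$ by an $\X$-epic with kernel (your $\tilde g=(g,p_B)$ is the canonical way to do this), then invoking Proposition~3.4 and Lemma~3.5---so the core argument is the same; you have simply supplied the details for (LT1)--(LT3) that the paper omits. One small remark: your appeal to ``any two distinguished left triangles with a prescribed third morphism are isomorphic'' in the (LT4) reduction is legitimate, but it is worth noting that this uniqueness is a consequence of (LT1)--(LT3) alone (via the long exact sequence for $\Hom(-,X)$ and the five-lemma), so there is no circularity.
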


Since special $\X$-epics are $\X$-epics and special $\X$-monics are $\X$-monics, we have an immediately corollary:
\vskip10pt
\begin{corollary} $($\cite[Theorem 2.12]{Beligiannis/Marmaridis94}$)$ \   Let $\mcC$ be an additive category and $\X$ an additive subcategory.
\vskip5pt
$(i)$ \ If $\X$ is contravariantly finite in $\mcC$ and every  $\X$-epic morphism has a kernel. Then $(\mcC/\X, \Omega, \triangle_\X)$ is a left triangulated category.

$(ii)$ \ Dually, if $\X$ is covariantly finite in $\mcC$ and every $\X$-monic morphism has a cokernel. Then $(\mcC/\X, \Sigma, \triangle^\X)$ is a right triangulated category.
\end{corollary}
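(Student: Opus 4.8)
The plan is to derive the corollary directly from the preceding theorem, observing that the corollary's hypothesis is merely a strengthening of the theorem's, so no fresh argument is needed. The one point requiring verification is the containment of hypotheses, and this rests on the elementary fact—already recorded in the paragraph following Definition 3.3—that every special $\X$-epic is an $\X$-epic and, dually, every special $\X$-monic is an $\X$-monic.

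First I would spell out the implication for part $(i)$. Suppose $\X$ is contravariantly finite in $\mcC$ and that every $\X$-epic morphism has a kernel. Any special $\X$-epic $B\oplus X_A\stackrel{(g,\,p_{_A})}\To A$ is in particular an $\X$-epic, so the standing assumption forces it to have a kernel. Hence contravariant finiteness together with ``every special $\X$-epic has a kernel'' holds, which is precisely the hypothesis of part $(i)$ of the theorem. Applying the theorem then yields that $(\mcC/\X,\Omega,\triangle_\X)$ is a left triangulated category. In particular the loop functor $\Omega$ of Section 2.2 is well defined in this setting, because right $\X$-approximations are $\X$-epics and so have kernels, meaning the constructions defining $\Omega$ and the distinguished class $\triangle_\X$ are literally the same objects used in the theorem.

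Part $(ii)$ follows by the dual observation: every special $\X$-monic is an $\X$-monic, so the assumption ``every $\X$-monic has a cokernel'' entails ``every special $\X$-monic has a cokernel,'' and part $(ii)$ of the theorem applies verbatim to produce the right triangulated structure $(\mcC/\X,\Sigma,\triangle^\X)$.

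There is essentially no obstacle to overcome here: the content of the corollary is entirely subsumed by the theorem, and the deduction is a single-step specialization via the inclusion of the class of special $\X$-epics (resp. $\X$-monics) into the class of $\X$-epics (resp. $\X$-monics). The only thing genuinely worth double-checking is that the loop and suspension functors and the triangle classes denote the same data in both statements—which they do, since all of them depend only on kernels of right $\X$-approximations, respectively cokernels of left $\X$-approximations, and these exist under the stronger hypothesis exactly as under the weaker one. This also makes transparent why the statement recovers \cite[Theorem 2.12]{Beligiannis/Marmaridis94}.
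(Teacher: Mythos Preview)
Your proposal is correct and follows exactly the paper's approach: the paper simply notes, immediately before the corollary, that special $\X$-epics are $\X$-epics and special $\X$-monics are $\X$-monics, so the corollary's hypotheses imply those of the theorem.
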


\vskip10pt

{\bf The proof of Theorem 3.6.} We only prove $(i)$. The proof of axioms of $({\rm LT}1)-({\rm LT}3)$ in Definition 3.1 are very similarly to the proof of Theorem 2.12 of \cite{Beligiannis/Marmaridis94}. So we only verify the octahedral axiom of Definition 3.1. For two composable morphisms $\ul{g}:\ul{C}\to \ul{B}$ and $\ul{f}:\ul{B}\to \ul{A}$ in $\mcC/\X$, without loss of generality, we may assume that $g: C\to B$ is $\X$-epic with a kernel.  The distinguished left triangle of $g$ is isomorphic to the induced left triangle of $g$ by Lemma 3.5, i.e. $\Omega(B)\stackrel{-\ul{\gamma}_g}\to \underline{\ker}(g)\stackrel{\underline{\iota}_g}\to \underline{C}\stackrel{\underline{g}}\to \underline{B} $.
 By Proposition 3.4, we obtain a commutative diagram in $\mcC/\X$ of left triangles:
\[
\xymatrix{ & \Omega({\rm PB}(f))\ar[d]_{-\ul{\gamma}_{\beta}}& & \\
\Omega(B) \ar[d]^{\Omega(f)}\ar[r]^{-\ul{\gamma}_g \ } & \ul{\ker} g \ar[r]^{\ul{\iota}_g} \ar[d]^{\ul{\alpha}} & \ul{C} \ar[r]^{\ul{g}} \ar@{=}[d] & B\ar[d]^{\ul{f}} \\
\Omega(A) \ar@{=}[d]\ar[r]^{\ul{\zeta}_{fg} \ } & \ul{{\rm PB}}(fg) \ar[r]^{
\ul{\eta}_{fg} }\ar[d]^{\ul{\beta}}& \ul{C}\ar[r]^{ \ul{fg}}\ar[d]^{\ul{g}} & \ul{A}\ar@{=}[d]\\
\Omega(A) \ar[r]^{\ul{\zeta}_f \ } & \ul{{\rm PB}}(f)\ar[r]^{\ul{\eta}_f}  & \ul{B} \ar[r]^{ \ul{f}}  & \ul{A} }
\]
where all rows are distinguished left triangles. Since $\beta$ is $\X$-epic with kernel $\alpha$, we know that the left triangle $\Omega({\rm PB}(f))\stackrel{-\ul{\gamma}_\beta}\to \ul{\ker}(g)\stackrel{\ul{\alpha}}\to \ul{{\rm PB}}(fg)\stackrel{\ul{\beta}}\to \ul{{\rm PB}}(f)$ is induced and thus is distinguished. Moreover, by $(ii)$ of Proposition 3.4, we have $\ul{\gamma}_\beta=\ul{\gamma}_g \Omega(\eta_f)$. \hfill $\Box$
\vskip10pt

\section{Stable categories induced from abelian model categories}
In this section we recall the definition of abelian model categories \cite[Definition 2.1]{Hovey02}, \cite[Definition 1.1.3]{Becker12} and Hovey's correspondence \cite[Theorem 2.2]{Hovey02}. As an application of Theorem 3.6, we will construct the left and right triangulated categories from an abelian model category. We begin by recalling the definition of model categories in the sense of Quillen  \cite{Quillen67}, \cite{Dwyer/Spalinski95},\cite{Hovey99}, \cite{Hovey02}, \cite{Hirschhorn03}, \cite{Becker12}.
\subsection{Abelian model categories}
\begin{definition}  \  A {\it model structure} on a category $\mcC$ is a triple $(\mcC of, \mathcal{W}e,$ $ \mathcal{F}ib)$ of classes of morphisms, called {\it cofibrations, weak equivalences } and {\it fibrations}, respectively, each of which contains isomorphisms, such that the following axioms are satisfied:
\vskip5pt
$\mc 1$ \ (Two out of three axiom) Given two composable morphisms $f$ and $g$ in $\mcC$, then if two of the three morphisms $f$, $g$ and $gf$ are weak equivalences, so is the third.
\vskip5pt
$\mc 2$ \  (Retract axiom)  $\mcC of, \mathcal{W}e, \mathcal{F}ib$ are closed under retracts.
\vskip5pt
$\mc 3$ \ (Lifting axiom)\ Given any commutative square
 \[
\xymatrix{
A \ar[r] \ar[d]_i& C \ar[d]^p \\
B \ar[r]_{f} \ar@{.>}[ru]& D }
\]
the dashed arrow exists, making everything commutative, provided that either $i\in \mcC of$ and $p\in \mathcal{W}e\cap \mathcal{F}ib$ or $i\in \mcC \cap \mathcal{W}e$ and $p\in \mathcal{F}ib$.
\vskip5pt
$\mc 4$ \ (Factorization axiom)  Any map $f$ can be factored in two ways: $(i)$ $f=pi$, with $i\in \mcC of, p\in \mathcal{W}e\cap \mathcal{F}ib$, and $(ii)$ $f=pi$,  with $i\in \mcC of\cap \mathcal{W}e, p\in \mathcal{F}ib$.
\end{definition}
\vskip10pt

A {\it model category} is a bicomplete category (i.e. a category possessing arbitrary small limits and colimits) equipped with a model structure.
\vskip5pt
Given a {\it pointed} model category $\mcC$ (a model category with zero object), an object $A\in \mcC$ is called {\it cofibrant} if $0\to A\in \mathcal{C} of$, it is called {\it fibrant} if $A\to 0\in \mathcal{F}ib$, and it is called {\it trivial} if $0\to A\in \mathcal{W}e$ (equivalently to $A\to 0\in \mathcal{W}e$). We use $\mcC_c, \mcC_f, \mcC_{triv}$ to denote the classes of cofibrant, fibrant, and trivial objects, respectively. The object in $\mcC_{cf}:=\mcC_c\cap \mcC_f$ is called {\it bifibrant}. A morphism which are both a cofibration (respectively fibration) and a weak equivalence is called {\it acyclic cofibration} (respectively {\it acyclic fibration}).

\begin{definition} \ A model structure on an abelian category is called {\it abelian} if the cofibrations equal monomorphisms with cofibrant cokernels and the fibraions equal epimorphisms with fibrant kernels. An {\it abelian model category} is a bicomplete abelian category equipped with an abelian model structure.
\end{definition}

\begin{remark} \
For an abelian model category $\A$, the acyclic cofibrations are precisely the monomorphisms with trivial cofibrant cokernels, the the acyclic fibrations are precisely the epimorphisms with trivial fibrant kernels \cite[Proposition 4.2]{Hovey02}.
\end{remark}

\vskip10pt
\subsection{Hovey's correspondence }
Hovey characterizes the abelian model structures in terms of complete cotorsion pairs in \cite{Hovey02}. Before we state Hovey's correspondence, we first give the related definitions of cotorsion pairs.

\begin{definition} \ \cite[Definition 2.3]{Hovey02} Let $\A$ be an abelian category. A {\it cotorsion pair} in $\A$ is a pair $(\mathcal{D}, \mathcal{E})$ of classes of objects of $\A$ such that
\vskip5pt
$(i)$ \ $\mathcal{D}=\{D\in \A \ | \ {\rm Ext}^1_\A(D, \mathcal{E})=0\}.$
\vskip5pt
$(ii)$ \ $\mathcal{E}=\{E\in \A \ | \ {\rm Ext}^1_\A(\mathcal{D}, E)=0\}.$
\end{definition}
The cotorsion pair $(\mathcal{D}, \mathcal{E})$ is called {\it complete} if the following conditions are satisfied:
\vskip5pt
$(iii)$ \ $(\mathcal{D}, \mathcal{E})$ has {\it enough projectives}, i.e. for each $A\in \A$ there exists a short exact sequence $0\to E\to D\to A\to 0$ such that $D\in \mathcal{D}$ and $E\in \mathcal{E}$.
\vskip5pt
$(iv)$ \ $(\mathcal{D}, \mathcal{E})$ has {\it enough injectives}, i.e. for each $A\in \A$ there exists a short exact sequence $0\to A\to E\to D\to 0$ such that $D\in \mathcal{D}$ and $E\in \mathcal{E}$.

\vskip10pt
For a convenient statement, we follow Gillespie \cite[Definition3.1]{Gillespie12} to define the notion of Hovey triple. Recall that a full subcategory $\mathcal{W}$ of an abelian category $\A$ is called {\it thick} if it is closed under direct summands and if two out of three of the terms in a short exact sequence are in $\mathcal{W}$, then so is the third.

\begin{definition} \ A triple $(\mathcal{D}, \mathcal{W}, \mathcal{E})$ of classes of objects in an abelian category $\A$ is called a {\it Hovey triple} if $\mathcal{W}$ is a thick subcategory of $\A$ and both $(\mathcal{D}, \mathcal{W}\cap \mathcal{E})$ and $(\mathcal{D}\cap \mathcal{W}, \mathcal{E})$ are complete cotorsion pairs in $\A$.
\end{definition}

\begin{theorem} \ $($\cite[Theorem 2.2]{Hovey02}$)$ \ Let $\A$ be a bicomplete abelian category. Then there is a bijection:
 \[
\left\{\begin{gathered}\text{abelian model structures  }\\ \text{on  $\A$}
\end{gathered}\;
\right\} \xymatrix@C=1pc{ \ar[r]^-{\sim} & }\left\{
\begin{gathered}
  \text{Hovey triples of $\A$}
\end{gathered}
\right\}\,
\]
which sends an abelian model structure of $\A$ to $(\A_c, \A_{triv}, \A_f)$.
\end{theorem}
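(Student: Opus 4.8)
\emph{Proof proposal.} This is Hovey's correspondence \cite[Theorem 2.2]{Hovey02}; I sketch the two mutually inverse assignments. In one direction, an abelian model structure on $\A$ has, by definition, cofibrations the monomorphisms with cokernel in $\A_c$ and fibrations the epimorphisms with kernel in $\A_f$, and by Remark 4.3 the acyclic cofibrations (resp. acyclic fibrations) are the monomorphisms with cokernel in $\A_c\cap\A_{triv}$ (resp. the epimorphisms with kernel in $\A_f\cap\A_{triv}$); this structure is sent to the triple $(\A_c,\A_{triv},\A_f)$. In the other direction, a Hovey triple $(\mathcal{D},\mathcal{W},\mathcal{E})$ is sent to the classes: cofibrations $=$ monomorphisms with cokernel in $\mathcal{D}$, fibrations $=$ epimorphisms with kernel in $\mathcal{E}$, and weak equivalences $=$ the morphisms admitting a factorization as a monomorphism with cokernel in $\mathcal{D}\cap\mathcal{W}$ followed by an epimorphism with kernel in $\mathcal{E}\cap\mathcal{W}$.

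For the first assignment I would check that $(\A_c,\A_{triv},\A_f)$ is a Hovey triple in three steps. First, $\A_{triv}$ is thick: it is closed under direct summands because weak equivalences are closed under retracts (axiom $\mc 2$ applied to $0\to A$), and it has the two-out-of-three property for short exact sequences by a bootstrapping argument from $\mc 1$ and $\mc 4$, decomposing the sequence through the factorization axiom and applying two-out-of-three. Second, ${\rm Ext}^1_\A(\A_c,\A_{triv}\cap\A_f)=0$ and, dually, ${\rm Ext}^1_\A(\A_c\cap\A_{triv},\A_f)=0$, because these vanishings are equivalent to cofibrations having the left lifting property against acyclic fibrations, resp. acyclic cofibrations against fibrations, which is contained in $\mc 3$. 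Third, the orthogonality is sharp, i.e. $\A_c={}^{\perp}(\A_{triv}\cap\A_f)$, $\A_{triv}\cap\A_f=\A_c^{\perp}$, and dually: given $C$ with ${\rm Ext}^1_\A(C,\A_{triv}\cap\A_f)=0$, factor $0\to C$ by $\mc 4$ as a cofibration $0\to Z$ followed by an acyclic fibration $Z\to C$; the Ext-vanishing makes this acyclic fibration split, so $C$ is a retract of the cofibrant object $Z$ and hence cofibrant by $\mc 2$; the other three equalities are analogous. Completeness of both cotorsion pairs is then precisely the two factorizations of $\mc 4$ applied to $0\to A$ and $A\to 0$.

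For the second assignment one must verify $\mc 1$--$\mc 4$ for the classes attached to a Hovey triple. The retract axiom $\mc 2$ for (co)fibrations is immediate since $\mathcal{D}$ and $\mathcal{E}$ are closed under summands. For the lifting axiom $\mc 3$ one first identifies the acyclic cofibrations with the monomorphisms whose cokernel lies in $\mathcal{D}\cap\mathcal{W}$ and the acyclic fibrations with the epimorphisms whose kernel lies in $\mathcal{E}\cap\mathcal{W}$; the lifting then follows from the elementary fact that a monomorphism with cokernel $C$ admits the left lifting property against an epimorphism with kernel $K$ whenever ${\rm Ext}^1_\A(C,K)=0$ (proved by a pullback and the long exact sequences in $\Hom$ and ${\rm Ext}^1$), applied to the cotorsion pairs $(\mathcal{D},\mathcal{W}\cap\mathcal{E})$ and $(\mathcal{D}\cap\mathcal{W},\mathcal{E})$. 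For the factorization axiom $\mc 4$ one starts with the trivial splitting $f=(0\ \ 1)\binom{1_X}{f}$ of a morphism $f\colon X\to Y$ through $X\oplus Y$ and improves the cokernel of the monomorphism, resp. the kernel of the epimorphism, by gluing in the short exact approximation sequences furnished by the completeness (``enough projectives''/``enough injectives'') of the relevant cotorsion pair along suitable pullbacks and pushouts. The delicate point, which I expect to be the main obstacle, is $\mc 1$ together with the retract-closure clause of $\mc 2$ for weak equivalences: one has to show that the class defined above agrees with the intrinsic class of weak equivalences --- a monomorphism is one exactly when its cokernel lies in $\mathcal{W}$, an epimorphism exactly when its kernel lies in $\mathcal{W}$, and a general morphism $f$ exactly when both $\ker f$ and $\operatorname{coker}f$ lie in $\mathcal{W}$ --- and this identification, together with the stability of the resulting class under composition and retracts, is exactly where the thickness of $\mathcal{W}$ (closure under summands, extensions, and kernels of epimorphisms and cokernels of monomorphisms between its members) and the completeness of the two cotorsion pairs are used, via the snake lemma.

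It then remains to see the two assignments are mutually inverse, which is formal. Building a model structure from $(\mathcal{D},\mathcal{W},\mathcal{E})$, its cofibrant objects are the $A$ for which $0\to A$ is a monomorphism with cokernel $A\in\mathcal{D}$, so $\A_c=\mathcal{D}$; symmetrically $\A_f=\mathcal{E}$; and $\A_{triv}=\mathcal{W}$ by the characterization of weak equivalences. Conversely, the model structure rebuilt from $(\A_c,\A_{triv},\A_f)$ has the same cofibrations and fibrations as the original one by the definition of an abelian model structure, and a morphism is a weak equivalence of the rebuilt structure iff it factors as an acyclic cofibration followed by an acyclic fibration, which by $\mc 1$ and $\mc 4$ is exactly the class of weak equivalences of the original model structure.
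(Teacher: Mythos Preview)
The paper does not prove this theorem; it is quoted verbatim from Hovey \cite[Theorem 2.2]{Hovey02} as background and used as a black box (the only surrounding text is the statement together with the definitions of cotorsion pair and Hovey triple). There is therefore no ``paper's own proof'' to compare against, and a citation would have sufficed here.

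That said, your sketch is a faithful outline of Hovey's original argument: the lifting axiom from ${\rm Ext}^1$-orthogonality, the factorizations from completeness of the two cotorsion pairs, thickness of $\A_{triv}$ from the retract and two-out-of-three axioms, and the delicate identification of the weak equivalences as the main obstacle. One caution: your proposed characterization ``a general morphism $f$ is a weak equivalence exactly when both $\ker f$ and $\operatorname{coker} f$ lie in $\mathcal{W}$'' is not literally what Hovey proves and is not obviously correct as stated; Hovey's definition is via the factorization, and the intrinsic characterizations he establishes are for monomorphisms (cokernel in $\mathcal{W}$) and epimorphisms (kernel in $\mathcal{W}$) separately. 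For an arbitrary $f$ one typically reduces to those cases through its epi--mono factorization, and the argument uses thickness of $\mathcal{W}$ rather than a bare kernel/cokernel test. If you want to include a proof rather than a citation, that step deserves to be spelled out carefully.
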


\vskip10pt
Now let $\A$ be an abelian model category. By Hovey's correspondence, there are two complete cotorsion pairs $(\A_c, \A_{tri}\cap \A_f)$ and $(\A_c\cap \A_{triv}, \A_f)$. Note that both $\A_f$ and $\A_c$ are additive subcategories of $\A$. Let $\omega=\A_{cf}\cap \A_{triv}$. Then $\omega$ is contravariantly finite in $\A_f$. In fact, for each $A\in \A_f$, there is a short exact sequence $0\to F \to C\stackrel{f}\to A\to 0$ such that $C\in \A_c\cap \A_{triv}$ and $F\in \A_f$ by the completeness of the cotorsion pair $(\A_c\cap \A_{triv}, \A_f)$. Since $\A_f$ is closed under extension, we know that $C\in \omega$. Moreover, for any $W\in \A_{tri}$, applying $\Hom_\A(W,-)$ to the above exact sequence we obtain a long exact sequence $$0\to Hom_\A(W,F)\to \Hom_\A(W, C)\stackrel{f_*}\to \Hom_\A(W, A)\to {\rm Ext}^1_\A(W, F)=0. $$
That is to say $f_*$ is a surjection and thus $f$ is a right $\omega$-approximation of $A$. Dually $\omega$ is covariantly finite in $\A_c$.
Similarly, we know that $\A_{cf}$ is also contravariantly finite in $\A_f$ and covariantly finite in $\A_c$.

In general, $\A_f$ and $\A_c$ need not to be closed under taking kernels of $\omega$-epics and taking cokernels of $\omega$-monics, respectively. But $\A_f$ is closed under taking kernels of special $\omega$-epics in $\A_f$ and $\A_c$ is closed under taking cokernels of special $\omega$-monics.

\begin{proposition} \ Let $\A$ be an abelian model category and $(\A_c, \A_{triv}, \A_f)$ the corresponding Hovey triple. Let $\omega=\A_{cf}\cap \A_{triv}$ Then
\vskip5pt
$(i)$ \ $\omega$ is contravariantly finite in $\A_f$ and any special $\omega$-epic in $\A_f$ has a kernel in $\A_f$. Thus the stable category $\A_f/\omega$ has a left triangulated structure induced by $\omega$.
\vskip5pt
$(ii)$ \ $\A_{cf}$ is contravariantly finite in $\A_f$ and any special $\A_{cf}$-epic in $\A_f$ has a kernel in $\A_f$. Thus the stable category $\A_f/\A_{cf}$ has a left triangulated structure induced by $\A_{cf}$.
\vskip5pt
$(iii)$ \ $\omega$ is covariantly finite in $\A_c$ and any special $\omega$-monic in $\A_c$ has a cokernel in $\A_c$. Thus the stable category $\A_c/\omega$ has a right triangulated structure induced by $\omega$.
\vskip5pt
$(iv)$ \ $\A_{cf}$ is covariantly finite in $\A_c$ and any special $\A_{cf}$-monic in $\A_c$ has a cokernel in $\A_c$. Thus the stable category $\A_c/\A_{cf}$ has a right triangulated structure induced by $\A_{cf}$.
\end{proposition}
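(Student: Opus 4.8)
The plan is to deduce all four parts from Theorem~3.6. Two facts do the work. First, $\A_f$ is the right-hand class of the complete cotorsion pair $(\A_c\cap\A_{triv},\A_f)$, hence closed under extensions and direct summands; dually $\A_c$ is the left-hand class of $(\A_c,\A_{triv}\cap\A_f)$, hence closed under extensions and summands. Second, contravariant finiteness of $\omega$ and of $\A_{cf}$ in $\A_f$ (and covariant finiteness in $\A_c$) was already established just before the statement, using the ``enough projectives'' halves of the relevant cotorsion pairs; those short exact sequences in particular give, for each $A\in\A_f$, a right $\omega$-approximation (resp.\ right $\A_{cf}$-approximation) $q_A:Y_A\to A$ which is an epimorphism whose kernel lies in $\A_f$.

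The core claim is: if $A\in\A_f$ and $p_A:X_A\to A$ is \emph{any} right $\omega$-approximation with $X_A\in\omega$, then $p_A$ is an epimorphism and $\ker p_A\in\A_f$; the same holds for right $\A_{cf}$-approximations. Epimorphy is a comparison: since $Y_A\in\omega$, the distinguished approximation factors as $q_A=p_A s$, and $q_A$ is epi, so $p_A$ is epi. For $\ker p_A\in\A_f$ I would look at $0\to\ker p_A\to X_A\stackrel{p_A}\to A\to 0$; since $X_A\in\A_f$, the long exact ${\rm Ext}$-sequence identifies ${\rm Ext}^1_\A(D,\ker p_A)$ with the cokernel of $\Hom_\A(D,X_A)\to\Hom_\A(D,A)$ for every $D\in\A_c\cap\A_{triv}$. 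To see this cokernel vanishes, embed $D$ via ``enough injectives'' of $(\A_c\cap\A_{triv},\A_f)$ into a short exact sequence $0\to D\to E\to D'\to 0$ with $E\in\A_f$ and $D'\in\A_c\cap\A_{triv}$; extension-closedness of $\A_c\cap\A_{triv}$ then forces $E\in\A_c\cap\A_{triv}$, hence $E\in\omega$. Because ${\rm Ext}^1_\A(D',A)=0$, every map $D\to A$ extends over $E$, and being a map out of an object of $\omega$ it factors through $p_A$; hence $\Hom_\A(D,X_A)\to\Hom_\A(D,A)$ is surjective and ${\rm Ext}^1_\A(D,\ker p_A)=0$. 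So $\ker p_A\in\A_f$. The $\A_{cf}$-case is verbatim the same: $D$ still ranges over $\A_c\cap\A_{triv}$, and the enlargement $E$ still lands in $\omega\subseteq\A_{cf}$, which is all that is used.

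Granting the core claim, let $B\oplus X_A\stackrel{(g,p_A)}\To A$ be a special $\omega$-epic with $B\in\A_f$. Since $p_A$ is epi, the projection $\ker(g,p_A)\to B$ is an epimorphism with kernel $\ker p_A$, so there is a short exact sequence $0\to\ker p_A\to\ker(g,p_A)\to B\to 0$ in $\A$ whose outer terms lie in $\A_f$; extension-closedness of $\A_f$ puts $\ker(g,p_A)$ in $\A_f$. Thus every special $\omega$-epic in $\A_f$ has a kernel in $\A_f$, and Theorem~3.6(i) produces the left triangulated structure on $\A_f/\omega$; running the argument with $\A_{cf}$ in place of $\omega$ gives (ii). Parts (iii) and (iv) are obtained by dualizing: every left $\omega$- (resp.\ $\A_{cf}$-) approximation of an object of $\A_c$ is a monomorphism with cokernel in $\A_c$, so the cokernel of a special $\omega$- (resp.\ $\A_{cf}$-) monic in $\A_c$ is an extension of two objects of $\A_c$, hence in $\A_c$, and Theorem~3.6(ii) applies.

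I expect the one real obstacle to be the assertion $\ker p_A\in\A_f$ for an \emph{arbitrary} approximation, since $\A_f$ is not closed under kernels of epimorphisms; the cotorsion-pair structure must be used in an essential way, and the trick of enlarging a cofibrant trivial test object to a bifibrant trivial one is exactly what makes the ${\rm Ext}$-vanishing go through. Everything else---extension- and summand-closedness, the explicit short exact sequences, and the dualizations---is routine bookkeeping.
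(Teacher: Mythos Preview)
Your argument is correct, but it takes a different route from the paper's.

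The paper exploits the freedom in the phrase ``assign a right $\omega$-approximation'': it simply \emph{chooses} the assigned approximation $p_A:W_A\to A$ to be the epimorphism with fibrant kernel coming directly from completeness of $(\A_c\cap\A_{triv},\A_f)$, so that $p_A$ is a fibration by definition of the abelian model structure. Then for a special $\omega$-epic $(f,p_A):B\oplus W_A\to A$ it forms the pullback of $f$ along $p_A$ and invokes stability of fibrations under base change: the pullback map $C\to B$ is again a fibration, hence an epimorphism with fibrant kernel, and $C\in\A_f$ by extension closure. No ${\rm Ext}$-computation is needed beyond what was used to establish contravariant finiteness.

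You instead prove the stronger lemma that \emph{every} right $\omega$-approximation of a fibrant object is an epimorphism with fibrant kernel, via the ${\rm Ext}$-argument that enlarges a test object $D\in\A_c\cap\A_{triv}$ to one in $\omega$. This is a genuine and correct homological argument; it buys you independence from the particular choice of approximation, at the cost of a longer computation. The paper's approach is shorter because it leans on the model-category fact (base change of fibrations) that is already available, whereas yours stays purely within the cotorsion-pair formalism. Both feed into the same extension-closure endgame.
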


\begin{proof} \ We only prove the statement $(i)$ and $(ii)$.  The others can be obtained from the first by duality.

$(i)$ \ By Theorem 3.6, we only need to prove that any special $\omega$-epic in $\A_f$ has a kernel in $\A_f$. By above discussion, for each $A\in \A_f$ we can assign a right $\omega$-approximation $p_A: W_A\to A$ such that $p_{_A}$ is a fibration, i.e. $p_{_A}$ is an epimorphism with $K_A=\ker p_{_A}\in \A_f$.  For any special $\omega$-epic morphism $(f, p_{_A}): B\oplus A\to A $, there is pullback square
\[
\xymatrix{
C \ar[r]^g \ar[d]& B \ar[d]^f \\
W_A \ar[r]_{p_{_A}} & A. }
\]
Then $C$ is the kernel of $(f, p)$. Since the class of fibrations is stable under base change \cite[Proposition 3.14]{Dwyer/Spalinski95} we know that $g$ is also a fibration, then there is a short exact sequence
$$0\to \ker g\to C\stackrel{g}\to B\to 0$$  with $\ker g\in \A_f$. Since $\A_f$ is closed under extensions, we obtain that $C\in \A_f$, i.e. the special $\omega$-epic morphism $(f, p_{_A})$ has a kernel in $\A_f$.

The proof of $(ii)$ is similarly.
\end{proof}

\begin{example} \  Let $\A=R$-Mod. where $R$ is a Gorenstein ring. Then there exists an abelian model structure on $R$-Mod, called {\it Gorenstein projective model structure} with $\A_c={\rm Gproj}(R)$ (the class of Gorenstein projective $R$-modules), $\A_{triv}=\mathcal{P}^{<\infty}(R)$ (the modules of finite projective dimension) and $\A_f=\A$, \cite[Theorem 8.6]{Hovey02}. In this case, the class of trivial bifibrant objects is the class of projective $R$-modules.The four stable categories are  $\A_f/\omega=R\mbox{-}\ul{{\rm Mod}}$, $\A_f/\A_{cf}=R\mbox{-}{\rm Mod}/{\rm Gproj}(R)$, $\A_c/\omega=\ul{{\rm Gproj}}(R), 0$. Dually, there exists an abelian model structure on $R$-Mod, called {\it Gorenstein injective model structure} with $\A_f/\omega={\rm Ginj}(R)$ (the class of Gorenstein injective $R$-modules), $\A_{triv}=\mathcal{I}^{<\infty}(R)$ (the modules of finite injective dimension) and $\A_c=\A$, \cite[Theorem 8.6]{Hovey02}. The four stable categories are  $\A_c/\omega=R\mbox{-}\overline{{\rm Mod}}$, $\A_c/\A_{cf}=R\mbox{-}{\rm Mod}/{\rm Ginj}(R)$, $\overline{{\rm Ginj}}(R), 0$.
We want to point out that the left or right triangulated structure of these stable categories can also be obtained by Beligiannis-Marmaridis's Theorem \cite[Theorem 2.12]{Beligiannis/Marmaridis94}.
\end{example}
\vskip10pt
\subsection{The pretriangulated structures of the homotopy categories of abelian model categories}
The homotopy category $\Ho(\A)$ of a model category $\A$ is the localization of $\A$ with respect to the class of weak equivalences \cite{Gabriel/Zisman67}.
By Quillen's theorem \cite[Theorem I.1]{Quillen67}, the homotopy category $\Ho(\A)$ is also equivalent to both the homotopy category $\Ho(\A_f)$ and $\Ho(\A_c)$. We recall the construction of the homotopy categories concretely. For each $A\in \A$, applying factorization axiom, we can factor the morphism $0\to A$ as the composite $0\to Q(A)\stackrel{q_{_A}}\to A$ with $Q(A)\in \A_c$ and $q_{_A}\in \mathcal{W}e\cap \mathcal{F}ib$. If $A$ is already a cofibant object, we take $Q(A)=A$. By construction, $Q(A)$ is in $ \A_{cf}$ if $A\in \A_f$. $Q$ becomes a functor from $\A_f$ to the quotient category $\pi\A_{cf}=\A_{cf}/\stackrel{h}\sim$ \cite[Lemma 5.1, Remark 5.2]{Dwyer/Spalinski95}, where $\stackrel{h}\sim$ denotes the homotopy relation which is defined via cylinder or path objects \cite[Section 4]{Dwyer/Spalinski95}, see also Section 5.1 below. Dually, there is a functor $R:\A_c\to \pi\A_{cf}$ which is constructed by the factorization of $A\to 0$ as an acyclic cofibration followed by a fibration $A\stackrel{i^A}\to R(A)\to 0$ for any $A\in \A$. The localization $\gamma: \A\to \Ho(\A)$ is constructed by sending each object $A$ to $A$ and each morphism $f: A\to B$ to $\gamma(f)=[RQ(f)]$ the homotopy class of $RQ(A)$ in $\pi\A_{cf}$, and as category, $\Ho(\A)$ is equivalent to the quotient category $\pi\A_{cf}$
\cite[Theorem I.1.1]{Quillen67}, \cite[Remark 5.7, Theorem 6.2]{Dwyer/Spalinski95}. In the homotopy category $\Ho(\A)$, any morphism $f:A\to B$ is of the form $\gamma(q_{_B})\gamma(i^{Q(B)})^{-1}\gamma(f')\gamma(i^{Q(A)})\gamma(q_{_A})^{-1}$ with $f'\in \Hom_{\A}(RQ(A), RQ(B))$ \cite[Proposition 5.8]{Dwyer/Spalinski95}. By the constructions of $Q$ and $R$ we know that each morphism $f:A\to B$ in $\Ho(\A_f)$ (respectively $\Ho(\A_c)$) can be written as $\gamma(\alpha)\gamma^{-1}(p_{_A})$ (respectively $\gamma(i^{B})^{-1}\gamma(\alpha)$).

If $\A$ is an abelian model category, with the notations as in Section 4.2, the homotopy category $\Ho(\A)$ is equivalent to the stable category $\A_{cf}/\omega$ \cite[Theorem VIII,4.2]{Beligiannis/Reiten07}, \cite[Proposition 4.3,4.7]{Gillespie11}, \cite[Proposition 1.1.14]{Becker12}.The full subcategory $\A_f$ becomes an {\it  additive fibration category } and $\A_c$ becomes an {\it additive cofibration category} in the sense of Definition 5.1, Remark 5.2. The homotopy category $\Ho(\A_f)$ is a left triangulated category \cite[Section I.4, Theorem 3]{Brown74}. The homotopy category $\Ho(\A_c)$ is a right triangulated category by duality. Next we will see that the left (respectively right) triangulated structure of $\A_f/\omega$ ($\A_c/\omega$) induces a left triangulated structure on $\Ho(\A_f)$ ( respectively $\Ho(A_c)$).


Recall that, for each $A\in \A_f$, we can assign a right $\omega$-approximation $p_{_A}:W_A\to A$ with $\ker p_{_A}=K_A\in \A_f$. The loop functor of $\A_f/\omega$ is induced by the functor $\Omega: \A_f\to \A_f/\omega$ which sends each $A\in \A_f$ to $\ul{K}_A$ and each morphism $f:A\to B$ to $\ul{\kappa}_f$, see Section 2.2.  Dually, for each $A\in \A_c$, we can assign a left $\omega$-approximation $A\stackrel{i^A}\to W^A$ with cokernel $K^A\in \A_c$. The suspension functor of $\A_c/\omega$ is induced by the functor $\Sigma: \A_c\to \A_c/\omega$ as defined in Section 2.2.

\begin{lemma} \ Let $\A$ be an abelian model category. With above notations, we have :
\vskip5pt
$(i)$ \ If $\ul{f} =\ul{g}$ in $\A_f/\omega$, then $f$ is right homotopic to $g$ in $\A_f$ and thus $\gamma(f)=\gamma(g)$ in $\Ho(\A_f)$.
\vskip5pt
$(ii)$ \ If $\ul{f} =\ul{g}$ in $\A_c/\omega$, then $f$ is left homotopic to $g$ in $\A_c$ and thus $\gamma(f)=\gamma(g)$ in $\Ho(\A_c)$.
\vskip5pt
$(iii)$ \ Let $f: A\to B$ be a morphism in $\A_f$, then $\gamma(f)=\gamma(g)$ in $\Ho(\A_f)$ if and only if $\ul{fq_{_A}}=\ul{gq_{_A}}$ in $\A_{f}/\omega$
\vskip5pt
$(iv)$ \ Let $f: A\to B$ be a morphism in $\A_c$, then $\gamma(f)=\gamma(g)$ in $\Ho(\A_c)$ if and only if $\ul{i^{B}f}=\ul{i^{B}g}$ in $\A_{c}/\omega$
\end{lemma}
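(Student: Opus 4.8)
\emph{Sketch of proof.} By passing to the opposite category $\A^{\op}$, which interchanges $\A_f$ and $\A_c$ together with the two complete cotorsion pairs supplied by Hovey's correspondence, parts $(ii)$ and $(iv)$ are the formal opposites of $(i)$ and $(iii)$, so I would only write out $(i)$ and $(iii)$.

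For $(i)$: if $\ul f=\ul g$ in $\A_f/\omega$ then $h:=f-g$ factors as $A\stackrel{\alpha}\to W\stackrel{\beta}\to B$ with $W\in\omega$, and the idea is to build an explicit path object for $B$ out of $W$. Since $W$ is cofibrant and trivial, the split monomorphism $s\colon B\to B\oplus W$, $b\mapsto(b,0)$, has cokernel $W$ and hence is an acyclic cofibration by Remark 4.3. Factor the morphism $r\colon B\oplus W\to B\oplus B$, $(b,w)\mapsto(b,\,b+\beta w)$, using axiom $\mc 4$ as $B\oplus W\stackrel{j}\to P\stackrel{q}\to B\oplus B$ with $j$ an acyclic cofibration and $q=(\pi_0,\pi_1)$ a fibration. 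Then $P$ is fibrant, being the source of a fibration into the fibrant object $B\oplus B$; the composite $js\colon B\to P$ is an acyclic cofibration; and $\pi_0(js)=\pi_1(js)=1_B$. Thus $P$ is a path object for $B$ inside $\A_f$. Now put $H\colon A\to B\oplus W$, $a\mapsto(0,\alpha a)$, and $H':=jH+(js)g\colon A\to P$; a direct check gives $\pi_0 H'=g$ and $\pi_1 H'=f$, so $f$ is right homotopic to $g$ in $\A_f$. Since $\gamma$ inverts the weak equivalence $js$, the identity $\pi_0(js)=\pi_1(js)$ forces $\gamma(\pi_0)=\gamma(\pi_1)$, and therefore $\gamma(g)=\gamma(\pi_0)\gamma(H')=\gamma(\pi_1)\gamma(H')=\gamma(f)$ in $\Ho(\A_f)$. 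Part $(ii)$ is obtained dually, building a cylinder object for $A$ in $\A_c$ out of $A\oplus W$ and the projection $A\oplus W\to A$ (an acyclic fibration by the dual of Remark 4.3), which yields a left homotopy between $f$ and $g$ and hence $\gamma(f)=\gamma(g)$ in $\Ho(\A_c)$.

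For $(iii)$, the implication ``$\Leftarrow$'' is immediate: if $\ul{fq_A}=\ul{gq_A}$ in $\A_f/\omega$ then $\gamma(fq_A)=\gamma(gq_A)$ by $(i)$, and cancelling the isomorphism $\gamma(q_A)$ (recall $q_A$ is a weak equivalence) yields $\gamma(f)=\gamma(g)$. For ``$\Rightarrow$'', assume $\gamma(f)=\gamma(g)$; then $\gamma(fq_A)=\gamma(gq_A)$ in $\Ho(\A_f)\simeq\Ho(\A)$, and since $Q(A)$ is bifibrant and $B$ is fibrant, the fundamental bijection $\Hom_\A(Q(A),B)/{\sim}\;\cong\;\Hom_{\Ho(\A)}(Q(A),B)$ shows that $u:=fq_A$ and $v:=gq_A$ are homotopic. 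Being maps from a cofibrant to a fibrant object they are left homotopic, and by factoring the collapse map of a cylinder object as a cofibration followed by an acyclic fibration ($\mc 4$) and lifting the homotopy along the resulting acyclic cofibration ($\mc 3$, the target being fibrant) I may assume the homotopy $K$ is carried by a \emph{very good} cylinder object $Q(A)\oplus Q(A)\stackrel{i}\to\mathrm{Cyl}\stackrel{\sigma}\to Q(A)$ with $\sigma$ an acyclic fibration. Writing $j_0,j_1$ for the two end inclusions, $u-v=K(j_0-j_1)$; since $\sigma$ is an epimorphism with $\sigma(j_0-j_1)=0$, the morphism $j_0-j_1$ factors through $Z:=\ker\sigma$, which lies in $\A_{triv}\cap\A_f$ by Remark 4.3. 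Finally $\mathrm{Cyl}$ is cofibrant, as it receives a cofibration from the cofibrant object $Q(A)\oplus Q(A)$, so ${\rm Ext}^1_\A(Q(A),Z)=0$ because $(\A_c,\A_{triv}\cap\A_f)$ is a cotorsion pair; hence $0\to Z\to\mathrm{Cyl}\to Q(A)\to0$ splits, $Z$ is a direct summand of $\mathrm{Cyl}$, and therefore $Z\in\A_{cf}\cap\A_{triv}=\omega$. Thus $fq_A-gq_A$ factors through $\omega$, i.e. $\ul{fq_A}=\ul{gq_A}$. Part $(iv)$ is dual: one uses a very good path object $B\to\mathrm{Path}\to B\oplus B$ with the first map an acyclic cofibration, and splits $0\to B\to\mathrm{Path}\to Z'\to0$ via ${\rm Ext}^1_\A(Z',B)=0$, which holds since $Z'\in\A_{triv}\cap\A_c$ and $(\A_{triv}\cap\A_c,\A_f)$ is a cotorsion pair, whence $Z'\in\omega$.

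The routine part consists of the two explicit path/cylinder computations underlying $(i)$ and $(ii)$. The genuinely delicate point is the backward implication of $(iii)$ and $(iv)$: proving that two homotopic maps between a cofibrant and a fibrant object differ by a morphism factoring through $\omega$. This hinges on first realising an abstractly given homotopy by a cylinder (resp. path) object whose collapse (resp. section) morphism is acyclic --- so that its kernel (resp. cokernel) automatically lands in $\A_{triv}\cap\A_f$ (resp. $\A_{triv}\cap\A_c$) --- and then exploiting the ${\rm Ext}^1$-vanishing built into Hovey's two complete cotorsion pairs to split the defining exact sequence of that object, which forces the relevant kernel (resp. cokernel) to be bifibrant, hence to lie in $\omega$.
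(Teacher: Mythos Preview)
Your argument is correct, but it diverges from the paper's in two places and is noticeably more elaborate than necessary.

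For part $(i)$, the paper avoids the MC4 factorisation entirely: it observes that the fixed right $\omega$-approximation $p_{_B}\colon W_B\to B$ already yields an explicit path object $B\oplus W_B$ for $B$ via
\[
B\xrightarrow{\left(\begin{smallmatrix}1\\0\end{smallmatrix}\right)} B\oplus W_B
\xrightarrow{\left(\begin{smallmatrix}1&p_{_B}\\1&0\end{smallmatrix}\right)} B\oplus B,
\]
the first map being an acyclic cofibration (cokernel $W_B\in\omega$) and the second a fibration (kernel $K_B\in\A_f$). Then any factorisation $g-f=s t$ through $W\in\omega$ lifts to $W_B$ via $p_{_B}l=s$, and $\left(\begin{smallmatrix}f\\lt\end{smallmatrix}\right)$ is a right homotopy. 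Your construction via MC4 is correct but introduces an auxiliary object $P$ that the paper's argument shows is unneeded.

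For the forward implication of $(iii)$, the paper takes a shorter route: it invokes the known identification (cited from \cite{Beligiannis/Reiten07}, \cite{Gillespie11}, \cite{Becker12}) of the homotopy relation on $\A_{cf}$ with stable equivalence modulo $\omega$. Thus $\gamma(f)=\gamma(g)$ forces $\ul{Q(f)}=\ul{Q(g)}$ in $\A_{cf}/\omega$, and the relation $(f-g)q_{_A}=q_{_B}(Q(f)-Q(g))$ transports the factorisation through $\omega$. Your very-good-cylinder argument, analysing $\ker\sigma$ via the cotorsion pair $(\A_c,\A_{triv}\cap\A_f)$ and the splitting from ${\rm Ext}^1(Q(A),Z)=0$, is essentially a self-contained proof of exactly that cited identification. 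So you are reproving the lemma the paper simply quotes; this buys independence from the external references at the cost of length. One small point to tighten: to ensure the composite $Q(A)\oplus Q(A)\to\mathrm{Cyl}$ is a cofibration you should begin with a \emph{good} cylinder before refining to a very good one, which is standard for cofibrant sources but worth stating.
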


\begin{proof} \ We only prove $(i)$ and $(iii)$. The other two statements are dual.
 $(i)$ \  For the definition of right homotopic relation see Section 5.1. Assume that $\ul{f}=\ul{g}: A\to B$ in $\A_f/\omega$, then there is a factorization of $g-f$: $A\stackrel{t}\to W\stackrel{s}\to B$ with $W\in \omega$. Note that $B\oplus W_B$ is a path object of $B$ since we have factorization  $B\stackrel{\left(\begin{smallmatrix}
1 \\
0
\end{smallmatrix}\right)}\to B\oplus W_B\stackrel{\left(\begin{smallmatrix}
1 & p_{_B}\\
1 & 0
\end{smallmatrix}\right)}\to B\oplus B$. Since $p_{_B}$ is a right $\omega$-approximation of $B$, there exists a morphism $l: W\to W_B$ such that $p_{_B}l=s$.
Then we have a commutative diagram
\[
\xymatrix{
A\ar[d]_{\left(\begin{smallmatrix}
f \\
lt
\end{smallmatrix}\right)} \ar[r]^{\left(\begin{smallmatrix}
g \\
f
\end{smallmatrix}\right)} & B\oplus B  \\
B\oplus W_B \ar[ru]_{\left(\begin{smallmatrix}
1 & p_{_B}\\
1 & 0
\end{smallmatrix}\right)}}
\]
so $\left(\begin{smallmatrix}
f \\
lt
\end{smallmatrix}\right)$ is a right homotopy from $f$ to $g$.

$(iii)$. \  By construction, $\gamma(f)=\gamma(g)$ if and only if $Q(f)$ is homotopic to $Q(g)$ in $\Ho(\A)$ and thus if and only if $Q(f)$ is stable equivalent to $Q(g)$ in $\A_{cf}/\omega$. Since $(f-g)q_{_A}=q_{_B}(Q(f)-Q(g))$ \cite[Lemma 5.1]{Dwyer/Spalinski95}, thus if $\ul{Q(f)}=\ul{Q(g)}$ in $\A_{cf}/\omega$, then $Q(f)-Q(g)$ factors through $\omega$ and then so is $fq_{_A}-gq_{_A}$, i.e. $\ul{fq_{_A}}=\ul{gq_{_A}}$ in $\A_{f}/\omega$. Conversely, if  $\ul{fq_{_A}}=\ul{gq_{_A}}$ in $\A_{f}/\omega$, by $(i)$,  $\gamma(fq_{_A})=\gamma(gq_{_A})$ in $\Ho(\A_f)$ and then $\gamma(f)=\gamma(g)$ since $\gamma(q_{_A})$ is an isomorpshim.
\end{proof}

\begin{lemma} \ $(i)$ \ The functor $\Omega: \A_f\to \A_f/\omega$ induces a functor $\A_f\to \Ho(\A_f)$ which sends weak equivalences into isomorphisms.

$(ii)$ \ The functor $\Sigma: \A_c\to \A_c/\omega$ induces a functor $\A_c \to \Ho(\A_c)$ which sends weak equivalences into isomorphisms.
\end{lemma}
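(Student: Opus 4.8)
The plan is to describe the asserted induced functor explicitly and then to reduce the statement to a gluing property of the model category $\A$. We treat only $(i)$, as $(ii)$ is dual. By part $(i)$ of the previous lemma, any two morphisms of $\A_f$ that become equal in $\A_f/\omega$ have the same image under the localisation $\gamma\colon\A_f\to\Ho(\A_f)$; hence $\gamma$ factors through the canonical functor $\A_f\to\A_f/\omega$ by a unique functor $L\colon\A_f/\omega\to\Ho(\A_f)$ with $L(\ul f)=\gamma(f)$. The functor in the statement is the composite $L\circ\Omega\colon\A_f\to\A_f/\omega\to\Ho(\A_f)$; by the description of $\Omega$ in Section 2.2 it sends an object $A$ to $K_A$ and a morphism $w\colon A\to B$ to $\gamma(\kappa_w)$, where $p_A\colon W_A\to A$ and $p_B\colon W_B\to B$ are the fixed right $\omega$-approximations, $x_w\colon W_A\to W_B$ is a lift with $p_Bx_w=wp_A$, and $\kappa_w\colon K_A\to K_B$ is the induced map on kernels. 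Since $\gamma$ inverts weak equivalences, it suffices to prove: if $w$ is a weak equivalence, then so is $\kappa_w$ (automatically a weak equivalence of $\A_f$, since $K_A,K_B\in\A_f$).

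Two observations make this work. First, recall that the right $\omega$-approximations $p_A$ were chosen to be fibrations of $\A$, with $\ker p_A=K_A\in\A_f$ (this is how they were produced in the proof that $\omega$ is contravariantly finite in $\A_f$). Second, since $W_A,W_B\in\omega\subseteq\A_{triv}$, the morphisms $W_A\to 0$ and $W_B\to 0$ are weak equivalences, so the two-out-of-three axiom applied to $W_A\stackrel{x_w}{\to}W_B\to 0$ shows that $x_w$ is a weak equivalence, for any morphism $w$ whatsoever. Now $K_A=0\times_A W_A$ and $K_B=0\times_B W_B$ (the pullbacks of $0\to A$ along $p_A$ and of $0\to B$ along $p_B$), and $\kappa_w$ is precisely the map induced on these pullbacks by the morphism of cospans
\[
\xymatrix{
0 \ar@{=}[d]\ar[r] & A \ar[d]^{w} & W_A \ar[l]_{p_A}\ar[d]^{x_w} \\
0 \ar[r] & B & W_B \ar[l]_{p_B}
}
\]
whose three vertical components are weak equivalences and in which $p_A$ and $p_B$ are fibrations.

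The conclusion is then an instance of the gluing lemma for pullbacks, valid in any right proper model category. The only remaining point is that $\A$ is right proper; this is well known, but can also be checked directly: acyclic fibrations are stable under base change in any model category, and the base change of an acyclic cofibration (a monomorphism with trivial cofibrant cokernel) along a fibration (an epimorphism) is again a monomorphism with the same --- hence again trivial cofibrant --- cokernel, so is again an acyclic cofibration; factoring an arbitrary weak equivalence as an acyclic cofibration followed by an acyclic fibration then shows that weak equivalences are stable under base change along fibrations. Applying the gluing lemma to the square above, inside the model category $\A$, gives that $\kappa_w\colon K_A\to K_B$ is a weak equivalence, so $(L\Omega)(w)=\gamma(\kappa_w)$ is an isomorphism. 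This proves $(i)$; part $(ii)$ is obtained dually, using left properness, pushouts, the cokernels of left $\omega$-approximations, and part $(ii)$ of the previous lemma.

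I expect the main difficulty to be conceptual rather than computational: one must recognise $K_A$ as a model for the homotopy fibre of a fibration with weakly-trivial total space over $A$ --- so that a weak equivalence $A\to B$ ought to induce a weak equivalence between such fibres --- and then make this precise by exhibiting $\kappa_w$ as a comparison of honest pullbacks to which right properness of $\A$ applies. The fact that this works hinges on the approximations $p_A$ having been arranged to be fibrations, which is exactly what ensures that the ordinary pullbacks $K_A=0\times_A W_A$ compute the homotopy fibres.
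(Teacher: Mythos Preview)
Your proof is correct and follows the same overall strategy as the paper's: establish well-definedness of $A\mapsto K_A,\ f\mapsto\gamma(\kappa_f)$ via part~$(i)$ of the preceding lemma, and then show that $\kappa_w$ is a weak equivalence whenever $w$ is, by invoking the Gluing Lemma for pullbacks along fibrations.

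The implementation differs in a small but pleasant way. The paper feeds the Gluing Lemma the path-object cospans $A\oplus W_A\to A\oplus A\leftarrow 0$ and $B\oplus W_B\to B\oplus B\leftarrow 0$, and must then argue separately that the comparison map $\left(\begin{smallmatrix} f & 0\\ 0 & x_f\end{smallmatrix}\right)$ is a weak equivalence, using that $A\to A\oplus W_A$ is an acyclic cofibration. You instead use the bare cospans $W_A\to A\leftarrow 0$ and $W_B\to B\leftarrow 0$ and observe that $x_w$ is automatically a weak equivalence because its source and target are trivial; this is a cleaner route to the same conclusion. One remark: your detour through right properness of $\A$ is correct but not needed here. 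All the objects in your diagram lie in $\A_f$, which the paper has already noted is an additive fibration category, and the Gluing Lemma cited (\cite[Lemma~1.4.1(2)]{Radulescu-Banu06}) holds in any fibration category without a properness hypothesis.
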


\begin{proof} \ We only prove $(i)$ since the assertion $(ii)$ is dual. Denote by $\gamma: \A_f\to \Ho(\A_f)$ the localization functor as constructed in Section 5.1.
Define a functor $F:\A_f\to \Ho(\A_f)$ which sends each object $A\in \A_f$ to $\Omega(A)$ and each morphism $f:A\to B$ to $\gamma(\kappa_f)$. This functor is well defined. In fact, recall the construction of $\kappa_f$:
\[
\xymatrix{
0\ar[r] & \Omega(A)\ar[r]^{\iota_A}\ar[d]_{\kappa_f} &W_A\ar[d]_{x_f} \ar[r]^{p_{_{A}}} & A\ar[d]_f \ar[r] &0 \\
0 \ar[r] & \Omega(B)\ar[r]^{\iota_B} & W_B \ar[r]^{p_{_{B}}} & B\ar[r]& 0.}
\]
where $p_{_A}, p_{_B}$ are the assigned right $\omega$-approximations of $A$ and $B$, respectively. If there is another pair of morphisms $x'_f$ and then $k'_f$ satisfy the above commutative diagram, by the discussion of Section 2.2, $\kappa_f-\kappa'_f$ factors through $W_A\in \omega$. Then $\kappa_f$ is right homotopic to $\kappa'_f$ by $(i)$ of Lemma 4.9. Thus $\gamma(\kappa_f)=\gamma(\kappa'_f)$.

Next we will show that $F$ takes weak equivalences into isomorphisms. It is enough to prove that if $f$ is a weak equivalence then so is $\kappa_f$.  Note that $\left(\begin{smallmatrix}
1 \\
0
\end{smallmatrix}\right):A\to A\oplus W_A$ and $\left(\begin{smallmatrix}
1 \\
0
\end{smallmatrix}\right): B\to B\oplus W_B$ are acyclic cofibrations, thus $\left(\begin{smallmatrix}
f & 0 \\
0 & x_f
\end{smallmatrix}\right)$ is also a weak equivalence by the two out of three axiom since $\left(\begin{smallmatrix}
f & 0 \\
0 & x_f
\end{smallmatrix}\right)\left(\begin{smallmatrix}
1 \\
0
\end{smallmatrix}\right)=\left(\begin{smallmatrix}
1 \\
0
\end{smallmatrix}\right)f$ is a weak equivalence.

Observe that $\left(\begin{smallmatrix}
1 & p_{_A} \\
1 & 0
\end{smallmatrix}\right): A\oplus W_A\to A\oplus A$ is a fibration with kernel $K_A\stackrel{\left(\begin{smallmatrix}
0 \\
\iota_A
\end{smallmatrix}\right)}\to A\oplus W_A$ and $\left(\begin{smallmatrix}
1 & p_{_B} \\
1 & 0
\end{smallmatrix}\right): B\oplus W_B\to A\oplus A$ is a fibration  with kernel $K_B\stackrel{\left(\begin{smallmatrix}
0 \\
\iota_B
\end{smallmatrix}\right)}\to B\oplus W_B$.
Consider the commutative diagram
\[
\xymatrix{
 A\oplus W_A\ar[r]^{\left(\begin{smallmatrix}
1 & p_{_A} \\
1 & 0
\end{smallmatrix}\right)}\ar[d]_{\left(\begin{smallmatrix}
f & 0 \\
0 & x_f
\end{smallmatrix}\right)} &A\oplus A\ar[d]_{\left(\begin{smallmatrix}
f & 0 \\
0 & f
\end{smallmatrix}\right)} & 0 \ar[l] \ar@{=}[d]  \\
 B\oplus W_B\ar[r]_{\left(\begin{smallmatrix}
1 & p_{_B} \\
1 & 0
\end{smallmatrix}\right)} & B\oplus B & 0\ar[l]}
\]
all the columns are weak equivalences, thus by the Gluing Lemma \cite[Lemma 1.4.1 (2)]{Radulescu-Banu06}, we know that $\kappa_f: K_A\to K_B$ is also a weak equivalence.
\end{proof}

\begin{remark} \ By this Lemma we know that the functor $F$ constructed as in the proof induces a functor which we still denote it by $\Omega: \Ho(\A_f)\to \Ho(\A_f)$ by the universal property of $\gamma$. Dually, the induced functor as in $(ii)$, we still denote it as $\Sigma: \Ho(\A_c)\to \Ho(\A_c)$.
\end{remark}

We define the distinguished left triangles in $\Ho(\A_f)$ as those which are isomorphic to the forms of $\Omega(A)\stackrel{\gamma(\zeta_f)}\to {\rm PB}(f)\stackrel{\gamma(\eta_f)}\to B\stackrel{\gamma(f)}\to A $, where $\Omega(A)\stackrel{\ul{\zeta}_f}\to \ul{\rm PB}(f)\stackrel{\ul{\eta}_f}\to \ul{B}\stackrel{\ul{f}}\to \ul{A} $ is a distinguished left triangle in $\A_f/\omega$.   Let $\triangle_l$ be the class of distinguished left triangles. Dually, we define the class $\triangle^r$ of distinguished right triangles of $\A_c/\omega$.

In the rest of this section, for simplicity, we will omit the underline on the object in stable categories but preserve the underline on the morphisms.

\begin{theorem} \ $(i)$ The pair $(\Omega, \triangle_l)$ is a left triangulated structure on $\Ho(\A_f)$.

\vskip5pt
$(ii)$ The pair $(\Sigma, \triangle^r)$ is a right triangulated structure on $\Ho(\A_c)$.
\end{theorem}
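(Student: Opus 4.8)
The plan is to realize the left triangulated structure of $\Ho(\A_f)$ as the image, along a comparison functor, of the one on $\A_f/\omega$ provided by $(i)$ of Proposition 4.7 and $(i)$ of Theorem 3.6, and then to verify the axioms $(\mathrm{LT}1)$--$(\mathrm{LT}4)$ by reducing each to the corresponding statement for $\A_f/\omega$. First I would record that every morphism of $\A_f$ between two trivial objects is a weak equivalence: if $u\colon W\to W'$ with $W,W'$ trivial, then $0\to W$ and $0\to W'=u\circ(0\to W)$ are weak equivalences, so two out of three forces $u$ to be one. In particular every object of $\omega$ is a zero object of $\Ho(\A_f)$, so the localization $\gamma\colon\A_f\to\Ho(\A_f)$ kills every morphism factoring through $\omega$; being additive (as $\Ho(\A_f)\simeq\A_{cf}/\omega$ is additive), $\gamma$ therefore factors through the quotient $\pi\colon\A_f\to\A_f/\omega$ as $\gamma=\bar\gamma\circ\pi$ for an additive functor $\bar\gamma\colon\A_f/\omega\to\Ho(\A_f)$ which is the identity on objects. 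Unravelling the construction of the loop functor on $\Ho(\A_f)$ (Lemma 4.10, Remark 4.11) gives $\bar\gamma\circ\Omega=\Omega\circ\bar\gamma$, and unravelling the definition of $\triangle_l$ together with Lemma 3.5 gives that $\triangle_l$ is the isomorphism closure of $\bar\gamma(\triangle_\omega)$, where $\triangle_\omega$ denotes the distinguished left triangles of $\A_f/\omega$; for $g\colon B\to A$ in $\A_f$ I write $\mathrm{can}(g)$ for the canonical one $\Omega(A)\stackrel{\ul{\zeta}_g}\to{\rm PB}(g)\stackrel{\ul{\eta}_g}\to B\stackrel{\ul{g}}\to A$. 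I will also use that every morphism $A\to B$ of $\Ho(\A_f)$ is, up to the isomorphism $\gamma(q_{_A})\colon Q(A)\to A$, of the form $\gamma(\alpha)$ with $\alpha\in\Hom_{\A_f}(Q(A),B)$ and $Q(A)\in\A_{cf}$, and that for $A\in\A_{cf}$ (so $q_{_A}=\id$) two honest morphisms $\alpha,\alpha'\colon A\to B$ of $\A_f$ have $\gamma(\alpha)=\gamma(\alpha')$ iff $\ul{\alpha}=\ul{\alpha'}$, by $(iii)$ of Lemma 4.9.

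The crux, which I expect to cost the most work since $\bar\gamma$ is neither full nor faithful, is the normal form: \emph{every distinguished left triangle of $\Ho(\A_f)$ is isomorphic, in $\Ho(\A_f)$, to $\bar\gamma(\mathrm{can}(h))$ for some morphism $h$ of $\A_f$ with bifibrant source and target.} I would prove this starting from the defining isomorphism to $\bar\gamma(\mathrm{can}(g))$ for some $g\colon B\to A$ in $\A_f$, then choosing cofibrant replacements $q_{_A}\colon Q(A)\to A$, $q_{_B}\colon Q(B)\to B$, a lift $\widetilde g\colon Q(B)\to Q(A)$ of $gq_{_B}$ along the acyclic fibration $q_{_A}$, and a lift $W_{Q(A)}\to W_A$ of a right $\omega$-approximation of $A$ over one of $Q(A)$. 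A routine chase of the two defining pullback squares produces a morphism of triangles $\bar\gamma(\mathrm{can}(\widetilde g))\to\bar\gamma(\mathrm{can}(g))$ with components $\Omega(\gamma(q_{_A}))$, $\gamma(\psi)$, $\gamma(q_{_B})$, $\gamma(q_{_A})$, where $\psi\colon{\rm PB}(\widetilde g)\to{\rm PB}(g)$ is induced on pullbacks. Here $\gamma(q_{_A})$ and $\gamma(q_{_B})$ are isomorphisms, hence so is $\Omega(\gamma(q_{_A}))$ (by functoriality, or by Lemma 4.10 since $\Omega$ inverts weak equivalences), and so is $\gamma(\psi)$: by the Gluing Lemma for pullbacks \cite[Lemma 1.4.1(2)]{Radulescu-Banu06} (the tool already used in Lemma 4.10), since the comparison of the two pullback cospans has all three legs weak equivalences --- $q_{_B}$, $q_{_A}$, and $W_{Q(A)}\to W_A$, the last being a morphism of trivial objects --- while the approximation maps are fibrations. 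Thus this morphism of triangles is an isomorphism, and $\widetilde g$ goes between bifibrant objects.

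Granting the normal form, $(\mathrm{LT}1)$ and $(\mathrm{LT}2)$ are immediate. For $(\mathrm{LT}1)$: $\bar\gamma$ of the trivial triangle of $\A_f/\omega$ is the trivial triangle of $\Ho(\A_f)$ (note $\Omega(0)=0$); and given $\bar f\colon B\to A$ in $\Ho(\A_f)$, write $\bar f\,\gamma(q_{_B})=\gamma(\alpha)$ with $\alpha\colon Q(B)\to A$, lift $\alpha$ through $q_{_A}$ to $\widetilde\alpha\colon Q(B)\to Q(A)$, apply $\bar\gamma$ to $\mathrm{can}(\widetilde\alpha)$ and twist its last two objects by the isomorphisms $\gamma(q_{_B})$ and $\gamma(q_{_A})$; by closure under isomorphism one gets a distinguished triangle whose last morphism is $\bar f$. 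For $(\mathrm{LT}2)$: a distinguished triangle is isomorphic to $\bar\gamma(S)$ with $S\in\triangle_\omega$; rotating $S$ stays in $\triangle_\omega$ by $(\mathrm{LT}2)$ for $\A_f/\omega$, $\bar\gamma$ of the rotation lies in $\triangle_l$, and using $\bar\gamma\Omega=\Omega\bar\gamma$ and additivity of $\bar\gamma$ it is the rotation of $\bar\gamma(S)$; and rotation respects isomorphism of triangles.

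For $(\mathrm{LT}3)$, by the normal form and closure under isomorphism I may assume the two triangles are $\bar\gamma(\mathrm{can}(f))$ and $\bar\gamma(\mathrm{can}(f'))$ with $f\colon B\to A$, $f'\colon B'\to A'$ between bifibrant objects and last morphisms $\gamma(f)$, $\gamma(f')$. As $A$ and $B$ are bifibrant, the given $\alpha\colon A\to A'$ and $\beta\colon B\to B'$ equal $\gamma(\alpha_0)$ and $\gamma(\beta_0)$ for honest morphisms of $\A_f$, and $\alpha\gamma(f)=\gamma(f')\beta$ translates by $(iii)$ of Lemma 4.9 (with $q_{_B}=\id$) into $\ul{\alpha_0 f}=\ul{f'\beta_0}$ in $\A_f/\omega$; then $(\mathrm{LT}3)$ for $\A_f/\omega$ supplies $\ul{\gamma}_0\colon{\rm PB}(f)\to{\rm PB}(f')$ completing a morphism $\mathrm{can}(f)\to\mathrm{can}(f')$, and $\bar\gamma$ carries $(\ul{\gamma}_0,\ul{\beta}_0,\ul{\alpha}_0)$ to the required morphism of triangles, the $\Omega(A)$-square coming from $\bar\gamma\Omega=\Omega\bar\gamma$. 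For $(\mathrm{LT}4)$, given $\bar g\colon C\to B$ and $\bar f\colon B\to A$ in $\Ho(\A_f)$, the same liftings replace $C,B,A$ by $Q(C),Q(B),Q(A)$ and $\bar g,\bar f$ by conjugates (by the isomorphisms $\gamma(q_{-})$) of $\gamma(\widetilde g)$, $\gamma(\widetilde f)$ for honest $\widetilde g\colon Q(C)\to Q(B)$, $\widetilde f\colon Q(B)\to Q(A)$, which --- exactly as in the proof of Theorem 3.6 --- we may take with $\widetilde g$ an $\omega$-epic having a kernel; choosing as the distinguished triangles attached to $\bar g,\bar f,\bar f\bar g$ the corresponding conjugates of the $\bar\gamma$-images of $\mathrm{can}(\widetilde g),\mathrm{can}(\widetilde f),\mathrm{can}(\widetilde f\widetilde g)$, one produces the octahedral diagram by applying $\bar\gamma$ to the one furnished in $\A_f/\omega$ by Proposition 3.4 and transporting it back along the $\gamma(q_{-})$; since $\bar\gamma$ is a functor intertwining $\Omega$ and the transport is by isomorphisms, commutativity and the distinguishedness of the middle column are preserved. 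Part $(ii)$ is dual, with ${\rm PB}$ replaced by ${\rm PO}$, cofibrant replacements by fibrant replacements, and $\A_f$ by $\A_c$.
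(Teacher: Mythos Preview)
Your approach is correct and follows essentially the same strategy as the paper's: transfer the left triangulated structure from $\A_f/\omega$ to $\Ho(\A_f)$ via the comparison functor $\bar\gamma$, using cofibrant replacements and the Gluing Lemma to handle the fact that not every morphism in $\Ho(\A_f)$ is of the form $\gamma(f)$.

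The difference is organizational. You isolate a single \emph{normal form} lemma --- every distinguished triangle in $\Ho(\A_f)$ is isomorphic to $\bar\gamma(\mathrm{can}(h))$ for some $h$ between bifibrant objects --- and prove it once with the Gluing Lemma; after that, $(\mathrm{LT}3)$ and $(\mathrm{LT}4)$ reduce cleanly to the corresponding axioms in $\A_f/\omega$, because for bifibrant sources every $\Ho(\A_f)$-morphism is $\gamma$ of an honest morphism and, by Lemma 4.9(iii) with $q=\id$, equality in $\Ho(\A_f)$ coincides with equality in $\A_f/\omega$. The paper instead carries out the cofibrant replacement and Gluing Lemma arguments separately and repeatedly inside the proofs of $(\mathrm{LT}3)$ and $(\mathrm{LT}4)$: first a special case where the vertical maps are of the form $\gamma(\alpha)$, then the general case via an auxiliary pullback along $q_{_B}$ (for $(\mathrm{LT}3)$) or further Gluing Lemma comparisons (for $(\mathrm{LT}4)$). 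Your packaging avoids this repetition and makes the role of bifibrancy transparent; the paper's version is more explicit about exactly which weak equivalences are inverted at each step. The mathematical content is the same.
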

\begin{proof} \ $(i)$ \ We prove the axioms $({\rm LT}1)-({\rm LT}4)$ of Definition 3.1 one by one.  Recall that any morphism in $\Ho(\A_f)$ starting from $A$ is of the form $\gamma(\alpha)\gamma(q_{_A})^{-1}$.

$({\rm LT}1)$ \ For any morphism $f: A\to B$ in $\Ho(\A_f)$, $f$ is of the form $\gamma(\alpha)\gamma(q_{_A})^{-1}$. For $\alpha: Q(A)\to B$ there exists a distinguished left triangle $\Omega(B)\stackrel{\ul{\zeta}_\alpha}\to \rm PB(\alpha)\stackrel{\ul{\eta}_\alpha}\to Q(A)\stackrel{\ul{\alpha}}\to B $ in $\A_f/\omega$. We have the following commutative diagram
\[
\xymatrix{
\Omega(B)\ar[r]^{\gamma(\zeta_\alpha)}\ar@{=}[d] & {\rm PB}(\alpha)\ar[r]^{\gamma(\eta_\alpha)}\ar@{=}[d] & Q(A)\ar[d]_{\gamma(q_{_A})} \ar[r]^{\gamma(\alpha)} & B\ar@{=}[d]  \\
\Omega(B) \ar[r]^{\gamma(\zeta_{\alpha})} & {\rm PB}(\alpha)\ar[r]^{\gamma(q_{_A}\eta_\alpha)} & A \ar[r]^{f} & B}
\]
in $\Ho(\A_f)$. Since $\gamma(q_{_A})$ is an isomorphism, thus $\Omega(B) \stackrel{\gamma(\zeta_{\alpha})} \To {\rm PB}(\alpha)\stackrel{\gamma(q_{_A}\eta_\alpha)}\To A \stackrel{f} \to B \in \triangle_l$ by definition. We should remind the reader that the distinguished left triangle of $f$ is independently of the choice of $\alpha$ by $(iii)$ of Lemma 4.9.

 For any $A\in \Ho(\A_f)$, $0\to A\stackrel{1_A}\to A\to 0\in \triangle_l$ since $0\to A\stackrel{\ul{1}_A}\to A\to 0$ is a distinguished left triangle in $\A_f/\omega$.

$({\rm LT}2)$ \ Without loss of generality, it is enough to prove that the left triangle $\Omega(A)\stackrel{-\Omega(\gamma(f))}\To \Omega(B)\stackrel{\gamma(\zeta_f)}\To {\rm PB}(f)\stackrel{\gamma(\eta_f)}\To B\in \triangle_l$ for a distinguished left triangle $\Omega(B)\stackrel{\gamma(\zeta_f)}\To {\rm PB}(f)\stackrel{\gamma(\eta_f)}\To A\stackrel{\gamma(f)}\To B $. Since $\Omega(B)\stackrel{-\ul{\Omega(f)}}\To \Omega(A)\stackrel{\ul{\zeta}_f}\To {\rm PB}(f)\stackrel{\ul{\eta}_f}\To \ul{A} $ is a distinguished left triangle in $\A_f/\omega$, we have $\Omega(A)\stackrel{-\gamma(\Omega(f))}\To \Omega(B)\stackrel{\gamma(\zeta_f)}\To {\rm PB}(f)\stackrel{\gamma(\eta_f)}\To B\in \triangle_l$. By the construction of $\Omega$ on $\Ho(\A_f)$, $\Omega(\gamma(f))=\gamma(\Omega(f))$, then $\Omega(A)\stackrel{-\Omega(\gamma(f))}\To \Omega(B)\stackrel{\gamma(\zeta_f)}\To {\rm PB}(f)\stackrel{\gamma(\eta_f)}\To B\in \triangle_l$.

$({\rm LT}3)$ \ Without loss of generality, we only consider the following  diagram of distinguished left triangles in $\Ho(\A_f)$ satisfying $h\gamma(f)=\gamma(f')g$:
\[
\xymatrix{
\Omega(B)\ar[r]^{\gamma(\zeta_f)}\ar[d]^{\Omega(h)} & {\rm PB}(f)\ar[r]^{\ \  \gamma(\eta_f)} & A \ar[d]^{g} \ar[r]^{\gamma(f)} & B\ar[d]^h  \\
\Omega(B') \ar[r]^{\gamma(\zeta_{f'})} & {\rm PB}(f')\ar[r]^{\ \ \gamma(\eta_{f'})} & A' \ar[r]^{\gamma(f')} & B'.}
\]
We start with the special case where $g=\gamma(\alpha)$ and $h=\gamma(\beta)$. Since $\gamma(\beta f)=\gamma(f'\alpha)$, by $(iii)$ of Lemma 4.9, we have $\ul{\beta fq_{_A}}=\ul{f'\alpha q_{_A}}$ in $\A_f/\omega$. So there is a commutative diagram of distinguished left triangles in $\A_f/\omega:$
\[
\xymatrix{
\Omega(B)\ar[r]^{\ul{\zeta}_{fq_{_A}}}\ar[d]^{\ul{\Omega(\beta)}} & {\rm PB}(fq_{_A})\ar[r]^{\ \  \ul{\eta}_{fq_{_A}}} \ar[d]^{\ul{u}}& Q(A) \ar[d]_{\ul{\alpha q_{_A}}} \ar[r]^{\ul{fq_{_A}}} & \ul{B}\ar[d]^{\ul{\beta}}  \\
\Omega(B') \ar[r]^{\ul{\zeta}_{f'}} & {\rm PB}(f')\ar[r]^{\ \ \ul{\eta}_{f'}} & A' \ar[r]^{\ul{f'}} & B'.}
\]
Consider the commutative diagram in $\A_f$
\[
\xymatrix{
 W_B\ar[r]^{p_{_B}}\ar@{=}[d] & B \ar@{=}[d]& Q(A) \ar[l]_{fq_{_A}} \ar[d]^{q_{_A}}  \\
 W_B\ar[r]_{p_{_B}} & B & A\ar[l]^{f}}
\]
Since $p_{_A}$ is a fibration and $q_{_A}$ is a weak equivalence, by the Gluing Lemma \cite[Lemma 1.4.1 (2)]{Radulescu-Banu06}, the morphism $v: {\rm PB}(fq_{_A})\to {\rm PB}(f) $ is also a weak equivalence and thus induces the following commutative diagram of isomorphic distinguished left triangles
\[
\xymatrix{
\Omega(B)\ar[r]^{\ul{\zeta}_{fq_{_A}}}\ar@{=}[d] & {\rm PB}(fq_{_A})\ar[r]^{\ \  \ul{\eta}_{fq_{_A}}} \ar[d]^{\ul{v}}& Q(A) \ar[d]_{\ul{ q_{_A}}} \ar[r]^{\ul{fq_{_A}}} & B\ar@{=}[d]  \\
\Omega(B) \ar[r]^{\ul{\zeta}_{f}} & {\rm PB}(f)\ar[r]^{\ \ \ul{\eta}_{f}} & A \ar[r]^{\ul{f}} & B.}
\]
Then $\gamma(u)\gamma(v)^{-1}: {\rm PB}(f)\to {\rm PB}(f')$ is the desired filler. For the general case, let $h=\gamma(\alpha)\gamma(q_{_B})^{-1}$. Take a pullback diagram in $\A_f$:
\[
\xymatrix{
C \ar[r]^{s} \ar[d]_l& A \ar[d]^f \\
Q(B) \ar[r]_{\ \ \ q_{_B}} & B. }
\]
Since $q_{_B}$ is an acyclic fibration, so is $s$. Write $g\gamma(s)$ as $\gamma(\beta)\gamma(q_{_C})^{-1}$, then in $\Ho(\A_f)$, we have $\gamma(f') \gamma(\beta)=\gamma(f')g\gamma(s)\gamma(q_{_C})=h\gamma(f)\gamma(s)\gamma(q_{_C})=\gamma(\alpha)\gamma(q_{_B})^{-1} \gamma(f)\gamma(s)\gamma(q_{_C})=\gamma(\alpha)\gamma(lq_{_C})$. Thus by the special case, there is a morphism $m:{\rm PB}(lq_{_C})\to {\rm PB}(f')$ in $\Ho(\A_f)$ such that the following diagram of distinguished left triangles commutes
\[
\xymatrix{
\Omega(Q(B))\ar[r]^{\gamma(\zeta_{_{lq_{_C}}})}\ar[d]^{\Omega(\gamma(\alpha))} & {\rm PB}(lq_{_C})\ar[r]^{\ \  \ \gamma(\eta_{_{lq_{_C}}})}\ar[d]^{m} & Q(C) \ar[d]^{\gamma(\beta)} \ar[r]^{\gamma(lq_{_C})} & B\ar[d]^{\gamma(\alpha)}  \\
\Omega(B') \ar[r]^{\gamma(\zeta_{f'})} & {\rm PB}(f')\ar[r]^{\ \ \gamma(\eta_{f'})} & A' \ar[r]^{\gamma(f')} & B'.}
\]
 Similarly to the construction of $v$ in the proof of the special case, there exists weak equivalence $n:{\rm PB}(lq_{_C})\to {\rm PB}(f)$  induced by the commutative diagram
 \[
\xymatrix{
A \ar[r]^{f}& B \\
Q(C)\ar[u]^{sq_{_C}} \ar[r]^{ lq_{_C}} & Q(B)\ar[u]_{q_{_B}} }
\]
in $\A_f$.  The composite $m \gamma(n)^{-1}:{\rm PB}(f)\to {\rm PB}(f')$ is the desired filler.

 $({\rm LT}4)$ \ The octahedral axiom. \ Given two composable morphisms $C\stackrel{g}\to B\stackrel{f}\to A$ in $\Ho(\A_f)$. If $g=\gamma(\beta)$ and $f=\gamma(\alpha)$, applying the octahedral axiom in $\A_f/\omega$ to the morphisms $C\stackrel{\beta}\to B\stackrel{\alpha}\to A$, we know that in this special case, the octahedral axiom holds in $\Ho(\A_f)$. In the general case, we write $f=\gamma(\alpha)\gamma(q_{_B})^{-1}$ and $\gamma(q_{_B})^{-1}g=\gamma(\beta)\gamma(q_{_C})^{-1}$. By the special case, there is a commutative diagram of distinguished left triangles:
 \[
\xymatrix{
& \Omega({\rm PB}(\alpha))\ar[d]^{\gamma(\zeta_{_\beta})\Omega(\gamma(\eta_\alpha))} &  & \\
\Omega(Q(B))\ar[r]^{\gamma(\zeta_{_\beta})}\ar[d]_{\Omega(\gamma(\alpha))} & {\rm PB}(\beta)\ar[r]^{\ \ \gamma(\eta_{_\beta})}\ar[d]^{n} & Q(C)\ar@{=}[d] \ar[r]^{\gamma(\beta)} & Q(B)\ar[d]^{\gamma(\alpha)}  \\
\Omega(A) \ar[r]^{\gamma(\zeta_{_\alpha\beta})\ }\ar@{=}[d] &{\rm PB}(\alpha\beta)\ar[r]^{\ \ \gamma(\eta_{_\alpha\beta})}\ar[d]^m & Q(C) \ar[r]^{\gamma(\alpha\beta)} \ar[d]^{g}& A\ar@{=}[d]\\
\Omega(A)\ar[r]^{\gamma(\zeta_\alpha)} & {\rm PB}(\alpha)\ar[r]^{\gamma(\eta_\alpha)}& Q(B)\ar[r]^{\gamma(\alpha)}& A.}
\]
By $({\rm LT}1)$, the distinguished left triangle of $f$ is isomorphic to the one of $\gamma(\alpha)$:
\[
\xymatrix{
\Omega(B)\ar[r]^{\gamma(\zeta_\alpha)}\ar@{=}[d] & {\rm PB}(\alpha)\ar[r]^{\gamma(\eta_\alpha)}\ar@{=}[d] & Q(A)\ar[d]_{\gamma(q_{_A})} \ar[r]^{\gamma(\alpha)} & B\ar@{=}[d]  \\
\Omega(B) \ar[r]^{\gamma(\zeta_{\alpha})} & {\rm PB}(\alpha)\ar[r]^{\gamma(q_{_A}\eta_\alpha)} & A \ar[r]^{f} & B.}
\]
Similarly, since $g=\gamma(q_{_B}\beta)\gamma(q_{_C})^{-1}$, we know that the distinguished left triangle of $g$ is isomorphic to  $\Omega(B)\stackrel{\gamma(\zeta_{_{q_{_B}\beta}})}\To {\rm PB}(q_{_B}\beta)\stackrel{\gamma(\eta_{_{q_{_B}\beta}})}\To Q(C)\stackrel{\gamma(q_{_B}\beta)}\to B $. Applying the Gluing Lemma \cite[Lemma 1.4.1 (2)]{Radulescu-Banu06} to the following commutative diagram
\[
\xymatrix{
 Q(B)\oplus W_{Q(B)}\ar[r]^{\left(\begin{smallmatrix}
1 & p_{_{Q(B)}} \\
1 & 0
\end{smallmatrix}\right)}\ar[d]_{\left(\begin{smallmatrix}
q_{_B} & 0 \\
0 & x_{_{q_{_B}}}
\end{smallmatrix}\right)} &Q(B)\oplus Q(B)\ar[d]_{\left(\begin{smallmatrix}
q_{_B} & 0 \\
0 & q_{_B}
\end{smallmatrix}\right)} & Q(C) \ar[l]_{\ \ \ \ \ \ \left(\begin{smallmatrix}
\beta  \\
0
\end{smallmatrix}\right)} \ar@{=}[d]  \\
 B\oplus W_B\ar[r]_{\left(\begin{smallmatrix}
1 & p_{_B} \\
1 & 0
\end{smallmatrix}\right)} & B\oplus B & Q(C)\ar[l]^{\ \ \ \ \ \left(\begin{smallmatrix}
q_{_B}\beta  \\
0
\end{smallmatrix}\right)}}
\]
(for $\left(\begin{smallmatrix}
q_{_B} & 0 \\
0 & x_{_{q_{_B}}}
\end{smallmatrix}\right)$ is a weak equivalence, see the proof of Lemma 4.10) we know that there is a weak equivalence ${\rm PB}(\beta)\to {\rm PB}(q_{_B}\beta)$ which gives an isomorphism from the distinguished left triangle of $\gamma(\beta)$ to the above one.

Since we can write $fg=\gamma(\alpha\beta)\gamma(q_{_C})^{-1}$, we know that the distinguished left triangle of $fg$ is isomorphic to the one of $\gamma(\alpha\beta)$. So the octahedral axiom holds for the morphisms $f$ and $g$ in $\Ho(\A_f)$. This completes the proof of the assertion $(i)$.

The statement of $(ii)$ can be obtained by duality.
\end{proof}

\begin{lemma}  In the situation of Proposition 4.12, we have
\vskip5pt
$(i)$ \ $(\Sigma Q, \Omega R)$ is an adjoint pair on $\Ho(\A)$;
\vskip5pt
$(ii)$ \ $(R\Sigma , Q\Omega )$ is an adjoint pair on $\A_{cf}/\omega$.

\end{lemma}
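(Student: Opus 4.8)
\ \ The plan is to deduce $(ii)$ from $(i)$ by transporting the adjunction along an equivalence of categories, and to prove $(i)$ by identifying $\Sigma Q$ and $\Omega R$ with the suspension and loop functors of the pretriangulated structure that $\Ho(\A)$ already carries.

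First I would set up the reduction. Recall from Section~4.3 that the inclusions $\A_f\hookrightarrow\A\hookleftarrow\A_c$ induce equivalences $\Ho(\A_f)\simeq\Ho(\A)\simeq\Ho(\A_c)$, with quasi-inverses induced by the cofibrant replacement $Q$ (valued in $\A_c$) and the fibrant replacement $R$ (valued in $\A_f$), and that there is an equivalence $e\colon\A_{cf}/\omega=\pi\A_{cf}\longrightarrow\Ho(\A)$. The functors $Q$ and $R$ descend to additive functors $\A_f/\omega\longrightarrow\A_{cf}/\omega$ and $\A_c/\omega\longrightarrow\A_{cf}/\omega$ (a morphism factoring through an object of $\omega$, which is bifibrant, is sent to such a morphism), and on $\A_{cf}$ they are naturally isomorphic to the identity. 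A routine computation on objects then shows that under $e$ the endofunctor $\Sigma Q$ of $\Ho(\A)$ corresponds to $R\Sigma$ on $\A_{cf}/\omega$ and $\Omega R$ corresponds to $Q\Omega$: for $X\in\A_{cf}$ one has $Q(X)=X$ and $\Sigma X\in\A_c$, and the object of $\A_{cf}/\omega$ representing the image of $\Sigma X$ in $\Ho(\A)$ is $R\Sigma(X)$, and dually. Since an equivalence carries adjoint pairs to adjoint pairs, $(ii)$ follows once $(i)$ is proved, so the work is in $(i)$.

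For $(i)$, by Quillen's theorem \cite[Theorem~I.2.2, Proposition~I.3.5]{Quillen67} (cf. \cite[Chapter~II]{Beligiannis/Reiten07}, \cite[Definition~4.9]{Beligiannis01}) the category $\Ho(\A)$ is pretriangulated, so its loop functor $\Omega'$ and suspension functor $\Sigma'$ already form an adjoint pair $(\Sigma',\Omega')$; it is therefore enough to construct natural isomorphisms $\Sigma Q\cong\Sigma'$ and $\Omega R\cong\Omega'$ of endofunctors of $\Ho(\A)$. The key point is that for $A\in\A$ one may, by the dual of the construction in Proposition~4.8, choose the left $\omega$-approximation $Q(A)\to W^{Q(A)}$ to be a monomorphism whose cokernel $\Sigma Q(A)$ lies in $\A_c$; such a morphism is a cofibration and its target $W^{Q(A)}\in\omega$ is a trivial object, so $W^{Q(A)}$ is a cone on $Q(A)$ and the exact sequence $0\to Q(A)\to W^{Q(A)}\to\Sigma Q(A)\to 0$ presents $\Sigma Q(A)$ as the homotopy cofiber of $Q(A)\to 0$, that is, as $\Sigma'Q(A)\cong\Sigma'A$ in $\Ho(\A)$, naturally in $A$ by functoriality of $Q$ and of $\Sigma'$. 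Dually, one may choose the right $\omega$-approximation $W_{R(B)}\to R(B)$ to be an epimorphism with fibrant kernel $\Omega R(B)$, hence a fibration from the trivial object $W_{R(B)}$, so that $W_{R(B)}$ is a path object for $R(B)$ and $\Omega R(B)=\ker(W_{R(B)}\to R(B))$ is naturally the loop object $\Omega'B$. Combining these isomorphisms would give $(\Sigma Q,\Omega R)\cong(\Sigma',\Omega')$, an adjoint pair.

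A more computational route to $(i)$ would be to exhibit the adjunction bijection directly: for $A$ cofibrant and $B$ fibrant, $\Hom_{\Ho(\A)}(\Sigma QA,B)$ and $\Hom_{\Ho(\A)}(A,\Omega RB)$ are sets of homotopy classes of maps, and applying $\Hom_\A(-,B)$ to $0\to A\to W^A\to\Sigma A\to 0$ and $\Hom_\A(A,-)$ to $0\to\Omega B\to W_B\to B\to 0$, together with the vanishings ${\rm Ext}^1_\A(\A_c\cap\A_{triv},\A_f)=0={\rm Ext}^1_\A(\A_c,\A_f\cap\A_{triv})$ coming from the two cotorsion pairs of the Hovey triple, yields $\Hom_{\Ho(\A)}(\Sigma QA,B)\cong{\rm Ext}^1_\A(\Sigma A,B)\cong{\rm Ext}^1_\A(A,\Omega B)\cong\Hom_{\Ho(\A)}(A,\Omega RB)$. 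In either approach the hard part will be to pass from a pointwise bijection to a genuine adjunction while keeping track of naturality: in the first one must justify that a cofibration into a trivial object (resp. a fibration from a trivial object) really computes the homotopy cofiber (resp. fiber), by feeding the above short exact sequences into the model structure and invoking the standard comparison of cones and path objects \cite{Dwyer/Spalinski95}, and then check that the resulting natural isomorphisms intertwine the unit and counit of $(\Sigma',\Omega')$; in the second one must instead produce a unit $A\to\Omega R\Sigma QA$ and a counit $\Sigma Q\Omega RB\to B$ inducing the displayed bijection and verify the triangle identities, and also establish that $\Hom_{\Ho(\A)}$ between a cofibrant object and a fibrant object is exactly the quotient of $\Hom_\A$ by the morphisms factoring through $\omega$ --- Lemma~4.9 supplies one inclusion, and the converse requires the cylinder/path-object calculus of the abelian model structure.
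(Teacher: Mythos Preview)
Your plan is viable in outline, but it diverges from the paper's argument, and your second route contains a slip.

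The paper does not invoke Quillen's pretriangulated structure $(\Sigma',\Omega')$ at all in proving this lemma; it builds the adjunction bijection directly by an elementary diagram chase. For $A\in\A_c$ and $B\in\A_f$, every morphism in $\Hom_{\Ho(\A)}(\Sigma A,B)$ has the form $\gamma(\alpha)$ for some $\alpha\colon\Sigma A\to B$ in $\A$. Since $p_{_B}\colon W_B\to B$ is a right $\omega$-approximation and $W^A\in\omega$, one completes
\[
\xymatrix{
0\ar[r] & A\ar[r]\ar[d]_{\kappa_\alpha} & W^A\ar[r]\ar[d] & \Sigma A\ar[r]\ar[d]^{\alpha} & 0\\
0\ar[r] & \Omega B\ar[r] & W_B\ar[r]^{p_{_B}} & B\ar[r] & 0
}
\]
to obtain $\kappa_\alpha\colon A\to\Omega B$, well defined modulo $\omega$ by the discussion in Section~2.2, and sets $\varphi(\gamma(\alpha))=-\gamma(\kappa_\alpha)$. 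The inverse is constructed dually, producing $\kappa^\beta\colon\Sigma A\to B$ from $\beta\colon A\to\Omega B$. General $A,B\in\A$ are handled by composing with the isomorphisms $\gamma(q_{_A})$ and $\gamma(i^B)$ in $\Ho(\A)$. Part~$(ii)$ is parallel, with the explicit formula $\varphi(\ul{\alpha})=-\ul{Q(\kappa_{\alpha\,i^{\Sigma A}})}$. This is shorter and more self-contained than comparing with Quillen's functors: it avoids the work of matching your specific $\Sigma,\Omega$ with the abstract ones, and it supplies the explicit description of the counit $\varepsilon_C\Sigma Q(\gamma(\alpha))=-\gamma(\kappa^\alpha)$ that is used verbatim in the proof of Corollary~4.14.

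Your first approach is logically sound but imports precisely the black box the paper is trying to open: Corollary~4.14 uses this lemma to exhibit an explicit pretriangulated structure on $\Ho(\A)$, so proving the lemma by quoting Quillen's adjunction $(\Sigma',\Omega')$ pre-empts that goal, and you would still owe a careful comparison of $\Sigma Q,\Omega R$ with $\Sigma',\Omega'$. Your second (Ext) approach has an error in the displayed chain: applying $\Hom_\A(-,B)$ to $0\to A\to W^A\to\Sigma A\to 0$ and using ${\rm Ext}^1_\A(W^A,B)=0$ yields ${\rm Ext}^1_\A(\Sigma A,B)\cong\Hom_\A(A,B)/\{\text{maps factoring through }W^A\}\cong\Hom_{\Ho(\A)}(A,B)$, not $\Hom_{\Ho(\A)}(\Sigma A,B)$ as you wrote; the dual computation likewise gives ${\rm Ext}^1_\A(A,\Omega B)\cong\Hom_{\Ho(\A)}(A,B)$, so the chain collapses to a tautology and does not produce the adjunction.
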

\begin{proof} \ $(i)$ \ Firstly, we prove that there is a natural isomorphism
$$\Hom_{\Ho(\A)}(\Sigma(A), B)\cong \Hom_{\Ho(\A)}(A, \Omega(B))$$
for $A\in \A_c, B\in \A_f$. In this case, any morphism from $\Sigma(A)$ to $B$ is of the form $\gamma(\alpha)$ for some $\alpha: \Sigma(A)\to B$ in $\A$, see the beginning of  Section 4.3 or \cite[Corollary I.1.1]{Quillen67}. For each $\gamma(\alpha)\in \Hom_{\Ho(\A)}(\Sigma(A),B)$, by the construction of $\Sigma(A)$ and $\Omega(B)$, there is a commutative diagram
\[
\xymatrix{
0\ar[r] & A\ar[r]\ar[d]_{\kappa_\alpha} & W^A\ar[d] \ar[r] & \Sigma(A)\ar[d]_\alpha \ar[r]& 0   \\
0 \ar[r] & \Omega(B)\ar[r]^{\iota_B} & W_B \ar[r]^{p_{_{B}}} & B \ar[r] &0. }
\]
The existence of the middle morphism is since $p_{_B}$ is a right $\omega$-approximation and $W^{A}\in \omega$. By the discussion in Section 2.2, we know that $\ul{\kappa}_\alpha$ is uniquely decided by the stable equivalence classes of $\alpha$ in $\A/\omega$. Since $A, \Sigma(A)\in \A_c$ and $B, \Omega(B)\in \A_f$, we know that the stable equivalence relations defined via $\omega$ is just the homotopy equivalence relation \cite[Proposition 4.4]{Gillespie11}. So $\gamma(\kappa_\alpha)$ is uniquely decided by $\gamma(\alpha)$. Dually, for each $\gamma(\beta)\in \Hom_{\Ho(\A)}(A,\Omega(B))$, there is a uniquely morphism $\gamma(\kappa^\beta): \Sigma(A)\to B $.
So if we define the map
$$\varphi: \Hom_{\Ho(\A)}(\Sigma(A), B)\to \Hom_{\Ho(\A)}(A, \Omega(B))$$
which sends $\gamma(\alpha)$ to $-\gamma(\kappa_{\alpha})$, then it has an inverse which sends $\gamma(\beta)$ to $-\gamma(\kappa^{\beta})$. The naturalness of $\varphi$ can be verified directly by the description of morphisms in $\Ho(\A_f)$ and $\Ho(\A_c)$.

In general, for $A, B\in \A$, we have natural isomorphisms
\begin{align*}
\Hom_{\Ho(\A)}(\Sigma Q(A), B)& \cong \Hom_{\Ho(\A)}(\Sigma Q(A),R(B)) \\
 & \cong \Hom_{\Ho(\A)}(Q(A), \Omega R(B)) \\
 & \cong  \Hom_{\Ho(\A)}(A, \Omega R(B))
\end{align*}
where the first and the last isomorphisms are given by the isomorphisms $\Sigma(A)\to R(\Sigma(A))$ and $Q(\Omega(B))\to \Omega(B)$ respectively, and the middle one is by the above proof.

\vskip5pt
$(ii)$ \ Similarly to the proof of $(i)$ we have adjunction isomorpshim
$$\varphi: \Hom_{\A_{cf}/\omega}(R\Sigma(A), B)\to \Hom_{\A_{cf}/\omega}(A, Q\Omega(B))$$
which sends $\ul{\alpha}$ to $-\ul{Q(\kappa_{\alpha i^{\Sigma A}})}$ where $i^{\Sigma A}: \Sigma A\to R\Sigma A$ is the acyclic cofibration and $\kappa_{\alpha i^{\Sigma A}}$ is constructed as in the proof of $(i)$. The inverse of $\varphi$ sends a morphism $
\ul{\beta}$ to $-R(\kappa^\beta)$.
\end{proof}
\vskip10pt
Recall that a {\it pretriangulated structure} on an additive category $\mcC$ is a quadruple $(\Omega, \Sigma, \triangle_l, \triangle^r)$ such that
$(i)$ \ $(\Omega, \Sigma)$ is an adjoint pair with unit $\eta$ and counit $\varepsilon$. $(ii)$ \ $(\Omega, \triangle_l)$ is a left triangulated structure on $\mcC$ and $(\Sigma, \triangle^r)$ is a right triangulated structure on $\mcC$. $(iii)$ \ For any diagram in $\mcC$ with commutative left square
\[
\xymatrix{
A\ar[r]^f\ar[d]_{\alpha} & B\ar[r]^g\ar[d]_{\beta} &C\ar@{.>}[d]_{\delta} \ar[r]^h & \Sigma(A)\ar[d]^{\varepsilon_{C'}\Sigma(\alpha)}    \\
\Omega(C') \ar[r]^{f'} & \Omega(A')\ar[r]^{g'} & B' \ar[r]^{h'} & C' . }
\]
where the upper row is in $\triangle^r$ and the lower row is in $\triangle_l$, there exists a morphism $\delta$ making the diagram commutative. $(iv)$ \ For any diagram in $\mcC$ with commutative right square
\[
\xymatrix{
A\ar[r]^f\ar[d]_{\Omega(\alpha)\eta_{A}} & B\ar[r]^g\ar@{.>}[d]_{\delta} &C\ar[d]_{\beta} \ar[r]^h & \Sigma(A)\ar[d]_{\alpha}    \\
\Omega(C') \ar[r]^{f'} & \Omega(A')\ar[r]^{g'} & B' \ar[r]^{h'} & C' . }
\]
where the upper row is in $\triangle^r$ and the lower row is in $\triangle_l$, there exists a morphism $\delta$ making the diagram commutative.

$\mcC$ is called a {\it pretriangulated category} if there is a pretriangulated structure on $\mcC$, \cite[Definition 4.9]{Beligiannis01}, \cite[Definition 6.5.1]{Hovey99}.
\vskip10pt
\begin{corollary} \ $(i)$ \ The left triangulated structure $(\Omega, \triangle_l)$ and right triangulated structure $(\Sigma, \triangle^r)$ is a pretrianguated structure on the homotopy category $\Ho(\A)$.

\vskip5pt
$(ii)$ \ The left triangulated structure $(\Omega, \triangle_l)$ and right triangulated structure $(\Sigma, \triangle^r)$ induce a pretrianguated structure on the stable category $\A_{cf}/\omega$.
\end{corollary}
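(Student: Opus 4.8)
The plan is to obtain Corollary 4.14 by combining the three preceding results with a direct check of the two compatibility axioms (iii) and (iv) in the definition of a pretriangulated structure recalled above. For part (i), Quillen's theorem identifies $\Ho(\A)$ with both $\Ho(\A_f)$ and $\Ho(\A_c)$ through the inclusions $\A_f\hookrightarrow\A$ and $\A_c\hookrightarrow\A$; transporting the left triangulated structure $(\Omega,\triangle_l)$ of $\Ho(\A_f)$ (Theorem 4.12(i)) and the right triangulated structure $(\Sigma,\triangle^r)$ of $\Ho(\A_c)$ (Theorem 4.12(ii)) along these equivalences puts both on $\Ho(\A)$, where the loop functor becomes $\Omega R$ and the suspension functor becomes $\Sigma Q$ in the notation of Lemma 4.13 --- we keep writing $\Omega$, $\Sigma$ as in the statement. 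Thus conditions (i) and (ii) of the definition hold: the adjoint pair required in (i) is exactly Lemma 4.13(i), and (ii) is the transported content of Theorem 4.12. It remains to verify (iii) and (iv).

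For (iii), I would first normalise the data: using the rotation axioms and closure under isomorphism, one may take the right triangle to be the pushout triangle $A\stackrel{f}\to B\stackrel{g}\to{\rm PO}(f)\stackrel{h}\to\Sigma(A)$ attached to $f$ and realised by a pushout square of $f$ along a left $\omega$-approximation of $A$, and the left triangle to be the pullback triangle of $h'\colon B'\to C'$, realised by a pullback square of $h'$ along a right $\omega$-approximation of $C'$; in particular $g'=\eta_{h'}$ is the pullback projection. The commuting left square supplies $\alpha\colon A\to\Omega(C')$, $\beta\colon B\to\Omega(A')$ with $f'\alpha=\beta f$; passing to the adjunct $\bar\alpha=\varepsilon_{C'}\Sigma(\alpha)\colon\Sigma(A)\to C'$ of $\alpha$ through the adjunction of Lemma 4.13 --- whose counit is itself built from exactly these pushout/pullback squares --- one constructs the filler $\delta\colon{\rm PO}(f)\to B'$ from the universal property of the pushout presenting ${\rm PO}(f)$, taking $g'\beta$ on the $B$-leg and a morphism built through the chosen $\omega$-approximations on the $\omega$-leg; the two restrictions agree on $A$ because consecutive maps in a left triangle compose to zero, so that $g'\beta f=g'f'\alpha=0$. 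One then checks, using Lemma 4.9 to descend from $\Ho(\A)$ to stable equalities in $\A_f/\omega$ and $\A_c/\omega$, that $\delta g=g'\beta$ and $h'\delta=\bar\alpha h$ hold in $\Ho(\A)$. Axiom (iv) is established dually, from the pullback triangle of $f$, the pushout triangle of $g'$, and the unit $\eta$ of the adjunction in place of the counit.

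Part (ii) then follows by transporting the entire pretriangulated structure of $\Ho(\A)$ obtained in part (i) along the equivalence $\A_{cf}/\omega\simeq\Ho(\A)$ of \cite[Theorem VIII.4.2]{Beligiannis/Reiten07}; by Lemma 4.13(ii) the resulting adjoint pair on $\A_{cf}/\omega$ is $(R\Sigma,Q\Omega)$, and the distinguished triangles are the ones coming from pullbacks and pushouts in $\A_{cf}$, which on bifibrant objects agree with the triangles defining $\triangle_l$ and $\triangle^r$; the three compatibility conditions are purely categorical and so cross the equivalence. I expect the main obstacle to be the bookkeeping forced by the chain of equivalences $\A_{cf}/\omega\simeq\Ho(\A)\simeq\Ho(\A_f)\simeq\Ho(\A_c)$ together with the fibrant and cofibrant replacement functors $Q$, $R$ hidden inside the definitions of $\Omega$ and $\Sigma$ on $\Ho(\A)$: throughout the verification of (iii) and (iv) one must repeatedly invoke Lemma 4.9 to move equalities of morphisms between a homotopy category and its stable model, and must take care that the squares in (iii) and (iv) commute in $\Ho(\A)$ rather than strictly in $\A$.
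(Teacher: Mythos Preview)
Your approach is essentially the paper's own: transport the triangulated structures along the equivalences $\Ho(\A_f)\simeq\Ho(\A)\simeq\Ho(\A_c)$, invoke Lemma~4.13 for the adjoint pair, and verify axioms (iii) and (iv) by constructing the filler $\delta$ from the pushout universal property of ${\rm PO}(f)$; part~(ii) is then obtained by transport along $\A_{cf}/\omega\simeq\Ho(\A)$, exactly as the paper does via the functors $Q$ and $R$.

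One point of your sketch needs sharpening. You justify the pushout compatibility by writing $g'\beta f=g'f'\alpha=0$, but both equalities hold only in $\Ho(\A)$ (the left square commutes there, and consecutive maps of a left triangle vanish there), whereas the pushout lives in $\A$ and its universal property demands agreement \emph{in} $\A$. The paper does precisely what your final paragraph anticipates: from $\gamma(\beta f)=\gamma(\zeta_g\alpha)$ it applies Lemma~4.9(iii) to obtain a factorisation $\beta f-\zeta_g\alpha=st$ through some $W\in\omega$ \emph{in} $\A$, lifts $t$ through the left $\omega$-approximation $\nu^A$ to a map $l\colon W^A\to W$, and takes $\eta_g sl$ as the $W^A$-leg; the check $\eta_g\beta f=\eta_g sl\,\nu^A$ is then a strict equality in $\A$, so the pushout produces $\delta$. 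The remaining relation $g\delta=-\kappa^\alpha\pi^f$ is likewise established in $\A/\omega$ first (via an auxiliary map $z\colon{\rm PO}(f)\to W_C$ satisfying $p_{_C}z=g\delta+\kappa^\alpha\pi^f$) and then pushed to $\Ho(\A)$ via Lemma~4.9(i). Once you rewrite the compatibility argument at the level of $\A$ rather than $\Ho(\A)$, your proposal coincides with the paper's proof.
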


\begin{proof} \ Since we have equivalences $\Ho(\A_f)\simeq \Ho(\A)\simeq \Ho(\A_c)$, by Theorem 4.12, $(\Omega, \triangle_l)$ and $(\Sigma, \triangle^r)$ give the homotopy category $\Ho(\A)$ left and right triangulated structure. By Lemma 4.13, we only need to verify $(iii)$ and $(iv)$ in the above definition. For $(iii)$, without loss of generality,
we only consider the following diagram
\[
\xymatrix{
A\ar[r]^{\gamma(f)}\ar[d]_{\gamma(\alpha)} & B\ar[r]^{\gamma(\mu^{f})}\ar[d]_{\gamma(\beta)} &{\rm PO}(f) \ar[r]^{\gamma(\pi^f)} & \Sigma(A)\ar[d]^{\varepsilon_{C}\Sigma Q(\gamma(\alpha))}    \\
\Omega(C) \ar[r]^{\gamma(\zeta_{g})} & {\rm PB}(g)\ar[r]^{\gamma(\eta_g)} & D \ar[r]^{\gamma(g)} & C . }
\]
with $A, B, C, D\in \A_{cf}$ (note that in this case any the morphism from $A$ to $\Omega(C)$ is of the form $\gamma(\alpha)$) such that the left square commutative. Let $\varphi$ be the adjunction isomorphism. Then $\varepsilon_{C}\Sigma Q(\gamma(\alpha))=\varphi^{-1}(\alpha)$, by definition (see the proof of Lemma 4.13) this is $-\gamma(\kappa^\alpha)$ as constructed by the commutative diagram
\[
\xymatrix{
0\ar[r] & A\ar[r]^{\nu^A}\ar[d]_{\alpha} & W^A\ar[d]_{y^\alpha} \ar[r]^{\pi^A} & \Sigma(A)\ar[d]^{\kappa^{\alpha}} \ar[r]& 0   \\
0 \ar[r] & \Omega(C)\ar[r]^{\iota_C} & W_C \ar[r]^{p_{_{C}}} & C \ar[r] &0. }
\]
Since $\gamma(\beta f)=\gamma(\zeta_g\alpha)$, by $(iii)$ of Lemma 4.9, we know that $\beta f-\zeta_g\alpha$ factors through $\omega$
\[
\xymatrix{ A\ar[r]^{\beta f-\zeta_g\alpha \ \ \ } \ar[rd]_{t}& {\rm PB}(g) \\
& W \ar[u]_{s} }
\]
Since $W\in \omega$, there is a morphism $l: W^A\to W$ such that $l\nu^A=t$.  Denote the cobase change of $f$ along $\nu^A$ by $m^f$.  Since we have $\eta_g \beta f=\eta_g(st+\zeta_g \alpha)=\eta_gst=\eta_gsl\nu^A$, by the pushout property, there is a morphism $\delta: {\rm PO}(f)\to D $ such that $\delta \mu^f=\eta_g\beta$ and $\delta m^f=\eta_g sl$. Similarly, since $\theta_g\beta f=\theta_gst+\theta_g\zeta_g\alpha=\theta_g sl\nu^A +\iota_C\alpha=(y^\alpha +\theta_g sl)\nu^A$, there exists a morphism $z: {\rm PO}(f)\to W_C$ such that $z\mu^f=\theta_g\beta$ and $zm^f=y^\alpha +\theta_gsl$. Then $(p_{_C}z-g\delta -\kappa^\alpha \pi^f)m^f=0=(p_{_C}z-g\delta -\kappa^\alpha \pi^f)\mu^f$ and thus $p_{_C}z-g\delta -\kappa^\alpha \pi^f=0$ by the uniqueness of the pushout property. So we have $\ul{g}\ul{\delta}=-\ul{\kappa}^\alpha\ul{\pi}^f$ and then $\gamma(g)\gamma(\delta)=-\gamma(\kappa^\alpha)\gamma(\pi^f)$ by Lemma 4.9. The morphism $\gamma(\delta): {\rm PO}(f)\to D$ is the desired fuller. Dually we can prove that the assertion $(iv)$ in the above definition holds.

$(ii)$ \ Let $Q(\triangle_l)$ be the class of left triangles of the form $Q\Omega(C)\stackrel{Q(\ul{h})}\To Q(E)\stackrel{Q(\ul{g})}\To Q(D)\stackrel{Q(\ul{f})}\To Q(C)$. Where $\Omega(C)\stackrel{\gamma(h)}\To E \stackrel{\gamma(g)}\To D \stackrel{\gamma(f)} \To C$ is a distinguished left triangle in $\triangle_l$. Then $(Q\Omega, Q(\triangle_l))$ is a left triangulated structure on $\A_{cf}/\omega$ (note that $Q:\Ho(\A_f)\to \A_{cf}/\omega$ is an equivalent). Dually, let $R(\triangle^r)$ be the class of right triangles of the form $R(A)\stackrel{R(\ul{f})}\To R(B)\stackrel{R(\ul{g})}\To R(C)\stackrel{R(\ul{h})}\To R\Sigma(A)$. Where $A\stackrel{\gamma(f)}\to B \stackrel{\gamma(g)}\To C \stackrel{\gamma(h)} \To \Sigma (A)$ is a distinguished right triangle in $\triangle^l$. Then $(R\Sigma, R(\triangle^r))$ is a right triangulated structure on $\A_{cf}/\omega$ (note that $R:\Ho(\A_c)\to \A_{cf}/\omega$ is an equivalent). Note that the stable category $\A_{cf}/\omega$ are subcategories of $\Ho(\A_f)$ and $\Ho(\A_c)$, we have that $\triangle_l$ contains $Q(\triangle_l)$ and $\triangle^r$ contains $R(\triangle^r)$ respectively. Then by $(i)$, the quadruple $(R\Sigma, Q\Omega, Q(\triangle_l), R(\triangle^r)$ is a pretriangulated structure on $\A_{cf}/\omega$.
\end{proof}

\section{Appendix}
In this appendix we will show that for an additive category $\mcC$ and a contravariantly finite additive subcategory $\X$, if $\X$ is closed under direct summands and every $\X$-epic has a kernel, then $\mcC$ becomes an additive fibration category with fibration structure induced by $\X$. Moreover the stable category $\mcC/\X$ is just the homotopy category of the fibration category $\mcC$. So it has a left triangulated structure as has been shown by K. S. Brown in \cite{Brown74}. This left triangulated structure coincides with the one constructed in Section 3.2.
\subsection{The homotopy categories of additive (co)fibration categories }
The notion of fibration categories was first introduced and studied by K. S. Brown under the name ``categories of fibrant objects" in \cite{Brown74}. The dual notion of fibration categories is cofibration categories. For more details we refer \cite{Radulescu-Banu06} and \cite{Schwede12}. For simplicity, we only state the case of fibration categories and omit the dual one.
\vskip5pt
\begin{definition} \ An {\it additive fibrantion category} $(\mcC, \mathcal{W}e, \mathcal{F}ib)$ consists of an additive category $\mcC$ and two classes of morphisms, the {\it fibrations} $\mathcal{F}ib$ respectively the {\it weak equivalences} $\mathcal{W}e$, that satisfy the following axioms $({\rm F}1)-({\rm F}4)$.
\vskip5pt
$({\rm F}1)$ \ All isomorphisms are fibrations and weak equivalences. Fibrations are closed under composite. Every morphism to a terminal object is a fibration.
\vskip5pt
$({\rm F}2)$ \ (Two out of three axiom) Given two composable morphisms $f$ and $g$ in $\mcC$, then if two of the three morphisms $f$, $g$ and $gf$ are weak equivalences, so is the third.

\vskip5pt
$({\rm F}3)$ \ (Pullback axiom) Given a fibration $p: B\to A$ and any morphism $f:C \to A$, there exists a pushout square
 \[
\xymatrix{
D \ar[r] \ar[d]_q& B \ar[d]^p \\
C \ar[r]_{f} & A }
\]
in $\mcC$ and the morphism $q$ is a fibration. If additionally $p$ is a weak equivalence, then so is $q$.
\vskip5pt
$({\rm F}4)$ \ (Factorization axiom) Every morphism in $\mcC$ can be factored as the composite of a weak equivalence followed by a fibration.
\end{definition}

\begin{remark} \ $(i)$ \ Dually we can define the notion of an {\it additive cofibration category} $(\mcC, \mathcal{W}e, \mcC of)$. The morphism in the class $\mcC of$ is called {\it cofibration}.
\vskip5pt
$(ii)$ \ The notions of fibration and cofibration categories can be defined on general categories and are a substantial generalization of Quillen's closed model category \cite{Quillen67}. From a closed model category one obtains a fibration category by restricting to the full sucategory of cofibrant objects and a cofibration category by restricting to the full subcategory of fibrant objects. The further relevant reference on fibration and cofibration category are \cite{Brown74}, \cite{Schwede12} and \cite{Radulescu-Banu06}.

\vskip5pt
$(iii)$ \ The pair $(\mathcal{W}e, \mathcal{F}ib )$ (respectively $(\mathcal{W}e, \mcC of)$) is called a {\it fibration structure} (respectively {\it cofibration structure}) if $(\mcC, \mathcal{W}e, \mathcal{F}ib )$ (respectively $(\mcC, \mathcal{W}e, \mcC of)$)is a fibration (cofibration) category.
\end{remark}

\vskip10pt
\begin{definition} \ ( \cite{Gabriel/Zisman67})  Let $\mcC$ be a category, and $\mathcal{W}$ a class of morphisms of $\mcC$. A functor $\gamma:\mcC\to \mathcal{D}$ is said to be a {\it localization of $\mcC$ with respect to $\mathcal{W}$} if

$(i)$\ $\gamma(f)$ is an isomorphism for each $f\in \mathcal{W}$;

$(ii)$ whenever $F: \mcC\to \mathcal{D}'$ is a functor carrying elements of $\mathcal{W}$ into isomorphisms, there exists a unique functor $G:\mcD\to \mcD'$ such that $G\gamma=F$.
\end{definition}

The homotopy category of an additive fibration category $\mcC$ is any localization of $\mcC$ at the class of weak equivalences. That is a pair $(\Ho(\mcC), \gamma)$ consists of a category $\Ho(\mcC)$ with the same objects as $\mcC$ and a localization $\gamma: \mcC\to \Ho(\mcC)$ that is identity on objects. Next we recall the description of $\Ho(\mcC)$ in \cite{Brown74}.

A {\it path object} for an object $A$ in a fibration category is an object $A^I$ of $\mcC$ together with a factorization of the diagonal map $A\stackrel{(1_A, 1_A)}\To A\prod A $:
$$A\stackrel{q}\to A^I\stackrel{(p_0,p_1)}\To A\prod A $$
 where $q$ is a weak equivalence and $(p_o,p_1)$ a fibration such that $p_0q=p_1q=1_A$. Note that every object has a path object and the morphisms $p_0,p_1$ are {\it acyclic fibrations}. Thus $\gamma(p_0)=\gamma(p_1)$ in $\Ho(\mcC)$ since $\gamma(q)$ is an isomorphism.

Two morphisms $f,g:A\to B$ in a fibration category are called {\it homotopic} if there exists a path object $B^I$ for $B$ and a morphism $H:A\to B^I$ (called the {\it homotopy}) such that $f=p_0H$ and $g=p_1H$. If $f$ and $g$ are homotopic we denote by $f\stackrel{h}\sim g$. Homotopic morphisms become equal in the homotopy category. In fact if $f\stackrel{h}\sim g$ via $H$, then $\gamma(f)=\gamma(p_0)\gamma(H)=\gamma(p_1)\gamma(H)=\gamma(g)$.

Two morphisms $f,g:A\to B$ in a fibration category $\mcC$ are {\it equivalent} if there is an acyclic fibration $t:A'\to A$ such that $ft\stackrel{h}\sim gt$. We will write $f\sim g$. This is an equivalence relation. Define a category $\pi\mcC$ to be the category with the same object as $\mcC$ and with $\Hom_{\pi\mcC}(A,B)$ equal to the quotient of $\Hom_\mcC(A,B)$ by the equivalence relation $\sim$. Then the weak equivalence class in $\pi\mcC$ admits a calculus of ``right fractions", and the homotopy category $\Ho(\mcC)$ is obtained from $\pi\mcC$ by inverting the weak equivalence, see \cite[Proposition 2]{Brown74} and \cite[Chapter I.2-I.3]{Gabriel/Zisman67}.

\vskip10pt
Part $(i)$ and $(ii)$ of the following theorem are the Theorem 1 and Remarks 2 of \cite{Brown74}. Part $(iii)$ is the dual statement of Corollary I.3.3.2 of \cite{Gabriel/Zisman67}.

\vskip10pt
\begin{theorem} $($\cite[Theorem 1, Remarks 2]{Brown74}$)$ \ Let $\mcC$ be a fibration category and $\gamma:\mathcal{M}\to \Ho(\mcC)$ the localization functor, then
\vskip5pt
$(i)$ \ The morphisms in $\Ho(\mcC)$ are precisely of the form $\gamma(f)\gamma(t)^{-1}$, where $f, t$ are morphisms in $\pi\mcC$ and $t$ is an acyclic fibration.
\vskip5pt
$(ii)$ \ If $f,g:A\rightrightarrows B$ are morphisms in $\mcC$ then $\gamma(f)=\gamma(g)$ if and only if there is an acyclic fibration $t:A'\to A$ such that $ft\stackrel{h}\sim gt$.
\vskip5pt
$(iii)$ If further $\mcC$ is an additive category,  so is $\Ho(\mcC)$.
\end{theorem}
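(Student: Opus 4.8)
The plan for $(i)$ and $(ii)$ is to reduce everything to the calculus of right fractions for the class $S$ of weak equivalences in $\pi\mcC$ that is asserted just above the statement; this is essentially Brown's argument \cite{Brown74}, so I would keep it short. The point is that the fibration axioms deliver exactly the conditions one needs: $({\rm F}1)$ makes $S$ contain the identities and be closed under composition; the pullback axiom $({\rm F}3)$ together with the factorization axiom $({\rm F}4)$ gives the right Ore condition (factor the given morphism as a weak equivalence followed by a fibration, then pull back along the fibration, acyclicity being preserved by $({\rm F}3)$); and the right cancellation condition comes from path objects together with $({\rm F}3)$--$({\rm F}4)$. Granting this, the standard span description of a category of right fractions gives that every morphism of $\Ho(\mcC)=S^{-1}(\pi\mcC)$ has the form $\gamma(f)\gamma(t)^{-1}$ with $t$ an acyclic fibration, which is $(i)$; and that $\gamma(f)=\gamma(g)$ for $f,g$ in $\mcC$ precisely when $f$ and $g$ become equal in $\pi\mcC$ after precomposition with a member of $S$. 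Unravelling the definition of the equivalence relation on $\pi\mcC$ and using $({\rm F}4)$ with $({\rm F}2)$ to trade an arbitrary weak equivalence for an acyclic fibration then yields $(ii)$.

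For $(iii)$ I would argue self-containedly. From the calculus of right fractions one has the usual filtered-colimit formula $\Hom_{\Ho(\mcC)}(A,B)\cong\varinjlim_{(t\colon A'\to A)\in S}\Hom_{\mcC}(A',B)$, with filtered index category by $({\rm F}3)$. Since each $\Hom_{\mcC}(A',-)$ carries finite products of $\mcC$ to products of sets and filtered colimits of sets commute with finite products, this shows that $\gamma$ preserves finite products; in particular $\gamma(0)$, the empty product, is a terminal object of $\Ho(\mcC)$.

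It remains to transport the additive structure along $\gamma$. Write $\nabla_B\colon B\oplus B\to B$ for the fold map and $e_B\colon 0\to B$ for the unique morphism out of the zero object in $\mcC$. The commutative-monoid axioms for $(B,\nabla_B,e_B)$ are commutative diagrams assembled from finite products, projections and diagonals, so the finite-product-preserving functor $\gamma$ carries them to the corresponding diagrams, making $(\gamma B,\gamma(\nabla_B),\gamma(e_B))$ a commutative-monoid object in $\Ho(\mcC)$. Defining $u+v:=\gamma(\nabla_B)\circ\langle u,v\rangle$ for $u,v\colon A\to B$ then makes each $\Hom_{\Ho(\mcC)}(A,B)$ a commutative monoid, and composition is bilinear; the only nonformal point is that an arbitrary morphism $h=\gamma(f)\gamma(t)^{-1}$ of $\Ho(\mcC)$ is a morphism of monoid objects, which falls out of applying $\gamma$ to the identities $k\nabla=\nabla(k\oplus k)$ (valid for every $k$ in the additive category $\mcC$, in particular for $k=f$ and $k=t$) and cancelling $\gamma(t)^{-1}$. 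Finally $\gamma(-1_B)$ provides an additive inverse for $1_{\gamma B}$, so these monoids are abelian groups; a preadditive category with finite products is additive (its terminal object being automatically a zero object), whence $\Ho(\mcC)$ is additive and $\gamma$ additive, which is the structure already used in the earlier sections. Alternatively, one can check directly that $\pi\mcC$ is additive and invoke the dual of \cite[Corollary I.3.3.2]{Gabriel/Zisman67}.

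The real work lies in supplying the input. For the route through $\pi\mcC$ the crux is that the equivalence relation defining $\pi\mcC$ is an additive congruence on $\mcC$: given homotopies witnessing $f\stackrel{h}{\sim}f'$ and $g\stackrel{h}{\sim}g'$, which may use different path objects over $B$ and different acyclic-fibration sources, one first passes to a common source using the pullback axiom $({\rm F}3)$ and then replaces the two path objects by a single one mapping onto both (glued over $B$), after which the two homotopies may literally be added, because the structure maps $p_0,p_1$ of a path object are morphisms of the additive category $\mcC$. For the self-contained route the pressure point is instead a careful verification of the calculus-of-right-fractions axioms for $S$, enough to justify both the filtered-colimit hom-description and the preservation of finite products; once that is granted, the transport above is purely formal. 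In either case the assertions for additive cofibration categories follow by duality.
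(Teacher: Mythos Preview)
The paper does not supply a proof of this theorem at all: the sentence immediately preceding the statement says that parts $(i)$ and $(ii)$ are Brown's Theorem~1 and Remarks~2, and that part $(iii)$ is the dual of \cite[Corollary~I.3.3.2]{Gabriel/Zisman67}, and the paper leaves it at that. Your proposal is therefore not being compared against an argument in the paper but against the cited sources.

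What you wrote is a correct and faithful expansion of those sources. For $(i)$ and $(ii)$ you follow Brown: verify that the image of the weak equivalences in $\pi\mcC$ admits a calculus of right fractions (using $({\rm F}3)$--$({\rm F}4)$ for the Ore condition and path objects for cancellation), and then read off the span description of morphisms and the criterion for $\gamma(f)=\gamma(g)$; this is exactly Brown's route, and the paper already records the calculus-of-fractions input just above the statement. For $(iii)$ your alternative---check that $\pi\mcC$ is additive and invoke the dual of Gabriel--Zisman---is literally the paper's citation; your self-contained variant via the filtered-colimit hom formula and preservation of finite products is also standard and correct, and has the virtue of not depending on first showing $\pi\mcC$ additive. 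Either route is fine. In short: your proposal is correct and matches the references the paper defers to; there is no divergence to discuss because the paper offers no independent argument.
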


\subsection{The induced (co)fibration structures}
Now let $\mcC$ be an additive category and $\X$ a full additive subcategory. Assume that $\X$ is closed under direct summands. We recall the following notations which were introduced in \cite[Definition 4.7]{Beligiannis01}.
\begin{definition} \ $\X$ is called {\it left (right) homotopic} if the following conditions hold:
\vskip5pt
$(i)$ \ $\X$ is a contravariantly (covariantly) finite in $\mcC$.
\vskip5pt
$(ii)$ \ Any $\X$-epic ($\X$-monic) has a kernel (cokernel) in $\mcC$.
\end{definition}

For an additive category $\mcC$ and its additive full subcategory $\X$ which is closed under direct summands. Define classes of morphism as $\mathcal{F}ib_\X$ the class of $\X$-epics, $\mathcal{C}of_\X$ the class of $\X$-monics and  $\mathcal{W}e_\X$ the class of stable equivalences (i.e. morphisms which are isomorphisms in the stable category $\mcC/\X$).
We call a fibration (respectively cofibration) structure $(\mathcal{W}e, \mathcal{F}ib)$ (respectively $(\mathcal{W}e, \mathcal{C}of)$) on $\mcC$ {\it induced} if there is some full additive subcategory $\X$ which is closed under direct summands. such that $\mathcal{W}e=\mathcal{W}e_\X$ and $\mathcal{F}ib=\mathcal{F}ib_\X$ ($\mathcal{C}of=\mcC of_\X$).

\vskip10pt Motivated by Theorem 4.5 of \cite{Beligiannis01} on the relationship between the functorially finite subcategories and the induced model structure for an additive category with split idempotents. We have the following result. Note that we do not assume that the additive category has split idempotents.
\vskip10pt
\begin{theorem} \ $(i)$ \ There is a bijection:
\[
\left\{\begin{gathered}\text{induced fibration }\\ \text{structures of $\mcC$}
\end{gathered}\;
\right\} \xymatrix@C=1pc{ \ar[r]^-{\sim} & }\left\{
\begin{gathered}
  \text{left homotopic}\\\text{ subcategories of $\mcC$
    }
\end{gathered}
\right\}\,
\]
which sends an induced fibration structure $(\mathcal{W}e_\X,\mathcal{F}ib_\X)$  to $\X$. The inverse map sends a left homotopic subcategory $\X$ to $(\mathcal{W}e_\X, \mathcal{F}ib_\X)$.
\vskip5pt
$(ii)$ Dually, there is a bijection:
\[
\left\{\begin{gathered}\text{induced cofibration }\\ \text{structures of $\mcC$}
\end{gathered}\;
\right\} \xymatrix@C=1pc{ \ar[r]^-{\sim} & }\left\{
\begin{gathered}
  \text{right homotopic}\\\text{ subcategories of $\mcC$
    }
\end{gathered}
\right\}\,
\]
which sends an induced cofibration structure $(\mathcal{W}e_\X,\mathcal{C}of_\X)$ to $\X$. The inverse map sends a right homotopic subcategory $\X$ to $(\mathcal{W}e_\X, \mathcal{C}of_\X)$.
\end{theorem}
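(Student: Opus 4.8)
The plan is to prove part $(i)$; part $(ii)$ follows by dualizing throughout (kernels $\leftrightarrow$ cokernels, $\X$-epics $\leftrightarrow$ $\X$-monics, pullbacks $\leftrightarrow$ pushouts, Definition 5.1 $\leftrightarrow$ its dual). Write $\Phi$ for the assignment $\X\mapsto(\mathcal{W}e_\X,\mathcal{F}ib_\X)$. I would prove (A) that $\Phi(\X)$ is a fibration structure whenever $\X$ is left homotopic and closed under direct summands, so $\Phi$ makes sense; (B) that every induced fibration structure is of the form $\Phi(\X)$ for such an $\X$; and (C) that $\Phi$ is injective, with inverse the stated map. The common engine for (B) and (C) is the recovery formula
\[
\X \;=\; \{\, A\in\mcC \ \mid\ (A\to 0)\in\mathcal{W}e_\X \,\} \;=\; \{\, A\in\mcC \ \mid\ \ul{A}\cong 0 \ \text{in}\ \mcC/\X \,\},
\]
valid because a morphism to $0$ is invertible in $\mcC/\X$ exactly when its source is, and because, $\X$ being closed under direct summands, $\ul{A}\cong 0$ forces $A$ to be a direct summand of the object of $\X$ through which $\id_A$ factors.

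For (A), axiom $({\rm F}1)$ is immediate: $\X$-epics contain the isomorphisms and are closed under composition, and every $A\to 0$ is $\X$-epic since $\Hom_\mcC(X,0)=0$. Axiom $({\rm F}2)$ holds because $\mathcal{W}e_\X$ is the class of morphisms inverted in the additive category $\mcC/\X$, and isomorphisms satisfy two-out-of-three in any category. Axiom $({\rm F}4)$ is witnessed by the factorization
\[
B \stackrel{\left(\begin{smallmatrix}1 \\ 0\end{smallmatrix}\right)}{\To} B\oplus X_A \stackrel{(f,\,p_{_A})}{\To} A ,
\]
for $p_{_A}\colon X_A\to A$ a right $\X$-approximation: the first map is split monic with cokernel $X_A\in\X$, hence a weak equivalence, and the second is a special $\X$-epic, hence an $\X$-epic by the remark in Section 3.2.

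The substantive axiom is $({\rm F}3)$. Given an $\X$-epic $p\colon B\to A$ and any $f\colon C\to A$, the morphism $(p,-f)\colon B\oplus C\to A$ is again $\X$-epic, hence has a kernel $D$ by hypothesis; its two projections make $D$ the pullback of $p$ along $f$, and the leg $D\to C$ is $\X$-epic because any $X\to C$ with $X\in\X$ lifts along $p$ to $B$, hence to $D$. When $p$ is moreover a weak equivalence, I expect the crux — and the main obstacle — to be the structural lemma that such an acyclic $\X$-epic is a split epimorphism with $\ker p\in\X$: invertibility of $\ul{p}$ gives $u$ with $pu-\id_A$ factoring through $\X$, and lifting that factorization along the $\X$-epic $p$ corrects $u$ to a genuine section of $p$, so $B\cong A\oplus\ker p$ with $p$ the projection; then $\ul{u}\,\ul{p}=\ul{\id_B}$ forces the complementary projection onto $\ker p$, hence $\id_{\ker p}$, to factor through $\X$, so $\ker p$ is a direct summand of an object of $\X$ and therefore lies in $\X$. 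The pullback of the projection $A\oplus\ker p\to A$ along $f$ is then the projection $C\oplus\ker p\to C$, a weak equivalence since $\ker p\in\X$. This completes (A).

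For (B), note that if $\Phi(\X)$ happens to be a fibration structure for a full additive $\X$ closed under direct summands, then $\X$ is automatically left homotopic: applying $({\rm F}3)$ to $p$ and the morphism $0\to A$ yields $\ker p$, so every $\X$-epic has a kernel; and factoring $0\to A$ as in $({\rm F}4)$ gives $0\to Y\to A$ with $\ul{Y}\cong 0$, hence $Y\in\X$, and with $Y\to A$ an $\X$-epic out of an object of $\X$, i.e. a right $\X$-approximation, so $\X$ is contravariantly finite. Thus ``induced fibration structure'' means precisely ``$\Phi(\X)$ for $\X$ left homotopic and closed under summands.'' For (C), the recovery formula shows that distinct such $\X$ yield distinct classes $\mathcal{W}e_\X$, so $\Phi$ is injective, and that $(\mathcal{W}e_\X,\mathcal{F}ib_\X)\mapsto\X$ is its two-sided inverse. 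Combining (A)--(C) gives the asserted bijection; everything outside the structural lemma used in $({\rm F}3)$ is either formal or a short diagram chase in the spirit of Section 3.
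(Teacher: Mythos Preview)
Your proposal is correct and follows essentially the same route as the paper: verify $({\rm F}1)$--$({\rm F}4)$ from the left-homotopic hypotheses (using the special $\X$-epic factorization for $({\rm F}4)$ and the ``acyclic $\X$-epic $=$ split epi with kernel in $\X$'' lemma for the stability of acyclic fibrations under pullback), and conversely recover contravariant finiteness and kernels from $({\rm F}4)$ and $({\rm F}3)$. The only differences are cosmetic: the paper cites the structural lemma as \cite[Lemma~4.4(2)]{Beligiannis01} rather than proving it inline, and it factors a map $X'\to A$ with $X'\in\X$ rather than $0\to A$ to obtain a right $\X$-approximation---your choice is slightly cleaner.
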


\begin{proof} \ The second statement follows from $(i)$ by duality. So we only prove the first. Given an induced fibration structure $(\mathcal{W}e, \mathcal{F}ib)$ on $\mcC$, assume that it is induced by $\X$. By the pullback axiom we know that any fibration, i.e. $\X$-epic morphism has a kernel. For any $A\in \mcC$, take a nonzero morphism $f: X'\to A$ with $X'\in \X$. Factoring $f$ as the composite $X'\stackrel{i}\to X\stackrel{p}\to A$ such that $i$ is a weak equivalence and $p$ a fibration. By definition $\ul{i}$ is an isomorphism in $\mcC/\X$ and $p$ is $\X$-epic. Thus $\ul{X}\cong \ul{X'}\cong 0$. So the identity $1_X$ can be factored as a composite $X\stackrel{t}\to X''\stackrel{s}\to X$. Since $s$ is $\X$-epic, it has a kernel and then $X$ is a direct summand of $\X''$ since $s$ is split. By assumption, $\X$ is closed under direct summands, we know that $X\in \X$ and then $p$ is a right $\X$-approximation of $A$. That is to say $\X$ is contravariantly finite in $\mcC$.

Conversely, given a left homotopic subcategory $\X$ of $\mcC$. Let $\mathcal{W}e_\X,\mathcal{F}ib_\X$ be defined as above. We need to verify the axioms of Definition 5.1 one by one. The axioms $({\rm F}1)$ and $({\rm F}2)$ of Definition 5.1 are obviously by the construction of $\mathcal{W}e_\X$ and $\mathcal{F}ib_\X$. For any morphism $f: A\to B$, take a right $\X$-approximation $p_{_B}: X_B\to B$, then $f$ can be written as $A\stackrel{\left(\begin{smallmatrix}
1 \\
0
\end{smallmatrix}\right)}\to A\oplus X_B\stackrel{(f, p_{_B})}\to B$ with $(f, p_{_B})\in \mathcal{F}ib_\X$ and $\left(\begin{smallmatrix}
1 \\
 0
\end{smallmatrix}\right) \in \mathcal{W}e_\X$. Thus the factorization axiom $({\rm F}4)$ holds. For any diagram
\[
\xymatrix{
& B \ar[d]^p \\
C \ar[r]_{f} & A }
\]
with $p$ a fibration. Since $p$ is $\X$-epic, we know that $(f, p): C\oplus B\to A$ is also an $\X$-epic, thus it has a kernel $D$ by assumption. $D$ is the pullback of $f$ and $p$, i.e. there is a pullback square
\[
\xymatrix{
D \ar[r] \ar[d]_q& B \ar[d]^p \\
C \ar[r]_{f} & A }
\]
By the pullback universal property we know that $q$ is also an $\X$-epic and then a fibration. If in addition $p$ is a weak equivalence, i.e. $p$ is an acyclic fibration. Then $p$ is split epimorphism with kernel in $\X$ by Lemma 4.4 (2) of \cite{Beligiannis01}. Then the base change $q$ is also a split epimorphism with kernel in $\X$, thus it is also an acyclic fibration. So we proved the pullback axiom $({\rm F}3)$.
\end{proof}

Let $\mcC$ be an additive fibration category with induced fibration structure by $\X$. Then the homotopy category $\Ho(\mcC)$ is the stable category $\mcC/\X$. In fact, any morphism in $\Ho(\mcC)$ is of the form $\gamma(f)\gamma(t)^{-1}$ with $t$ an acyclic fibration by $(i)$ of Theorem 5.4. Thus $t$ is a split epimorphism with kernel in $\X$ by Lemma 4.4 (2) of \cite{Beligiannis01}, i.e. $t$ has a right inverse. So any morphism of $\Ho(\mcC)$ is of the form $\gamma(f)$ with $f$ in $\mcC$. Thus $\Hom_{\Ho(\mcC)}(A,B)\cong \Hom_{\pi\mcC}(A,B)$ (see Section 5.1). Similarly we know that $f\sim g$ in $\mcC$ if and only if $f\stackrel{h}\sim g$. But $f\stackrel{h}\sim g$ is equivalent to $f$ and $g$ are stable equivalent. Thus $\Ho(\mcC)\cong \mcC/\X$. For $A\in \Ho(\mcC)$, the suspension of $A$ is constructed by K. S. Brown in \cite{Brown74} as the kernel of a fibration $ A^I\to A\oplus A$ which is part of a path object of $A$. But in this case, any path object of $A$ is of the form $A\stackrel{\left(\begin{smallmatrix}
1  \\
0
\end{smallmatrix}\right)}\to A\oplus X_A\stackrel{\left(\begin{smallmatrix}
1 & p_{_A} \\
1 & 0
\end{smallmatrix}\right)} \to A\oplus A$, where $p_A$ is a right $\X$-approximation of $A$. So the suspension of $A$ is the kernel $p_{_A}$ which is the same as constructed in Section 3.2. The distinguished left triangles constructed by Brown is the same as the induced left triangles as we have defined in Section 3.2.

\vskip10pt

\end{document}